\documentclass[a4paper]{article}

\usepackage[utf8x]{inputenc}
\usepackage{latexsym, amssymb, amsmath, amsthm, amsfonts, mathrsfs}

\usepackage{cite}

\usepackage{url}
\makeatletter
\def\url@leostyle{%
  \@ifundefined{selectfont}{\def\UrlFont{\sf}}{\def\UrlFont{\small\ttfamily}}}
\makeatother
\urlstyle{leo}

\theoremstyle{plain}

\newtheorem{thm}{Theorem}

\newtheorem{lem}{Lemma}
\newtheorem{cor}{Corollary}

\theoremstyle{remark}

\theoremstyle{definition}
\newtheorem{dfi}{Definition}


\newcommand{\scalpr}[2]{\langle #1 , #2 \rangle}
\renewcommand{\det}{\operatorname{det}\,}
\renewcommand{\div}{\operatorname{div}}

\newcommand{\grad}{\operatorname{grad}}

\newcommand{\tr}{\operatorname{tr}}


\newcommand{\N}{\mathbb{N}}

\newcommand{\R}{\mathbb{R}}
\newcommand{\Ss}{\mathbb{S}}
\newcommand{\Z}{\mathbb{Z}}
\renewcommand{\cal}{\mathcal}


\bibliographystyle{alpha}

\begin{document}

\author{
{\large Ernst Kuwert and Johannes Lorenz}
\footnote{The authors acknowledge support by Deutsche Forschungsgemeinschaft
via SFB/Transregio 71 {\em Geometric Partial Differential Equations}.}
}

\title{On the Stability of the CMC Clifford Tori as Constrained Willmore Surfaces}

\maketitle

\begin{abstract}
The tori $T_r = r\, {\mathbb S}^1 \times s \, {\mathbb S}^1 \subset {\mathbb S}^3$,
where $r^2 + s^2 = 1$, are constrained Willmore surfaces, i.e. critical points of 
the Willmore functional among tori of the same conformal type. We compute which
of the $T_r$ are stable critical points.
\end{abstract}

\section*{Introduction}
For an immersed closed surface $f: \Sigma \rightarrow {\mathbb S}^3$ the Willmore functional is
$$
\cal W(f) = \int _{\Sigma} \left( \frac{1}{4}  |\vec H|^2 + 1 \right) \, d \mu_g,
$$
where $\vec H$ is the mean curvature vector in ${\mathbb S}^3$ and $g$ is the induced metric.
Critical points are called Willmore surfaces. They are characterized by the 
Euler-Lagrange equation
\begin{equation*}
\vec{W}(f) = \Delta ^{\perp} \vec H + g^{ij} g^{kl} A^\circ _{ik} \scalpr{A _{jl}^\circ}{\vec H} = 0.
\end{equation*}
Here $\Delta^{\perp}$ denotes the Laplacian in the normal bundle along $f$, and $A^\circ$ 
is the tracefree component of the vector-valued second fundamental form $A$. By definition,
$f:\Sigma \to {\mathbb S}^3$ is a (conformally) constrained Willmore surface if it is a critical point of 
${\cal W}$ with respect to variations in the class of surfaces having the same conformal 
type. In other words, if $\pi:{\cal M}(\Sigma) \to {\cal T}(\Sigma)$ denotes the projection 
from Riemannian metrics onto Teichm\"uller space, then the point $\pi(f^\ast g_{{\mathbb S}^3})$ 
is prescribed \cite{Tro,fistro,KS}. The resulting Euler-Lagrange equation is
\begin{equation}
\label{constwill}
\vec{W}(f) = g(A^\circ,q) \quad \mbox{ for some } q \in S^{TT}_2 (g).
\end{equation}
In the case of a torus, the space $S^{TT}_2(g)$ of symmetric, covariant $2$-tensors $q$ 
with ${\rm div}_g q = 0$ and ${\rm tr}_g(q) = 0$ (transverse traceless) is two-dimensional.
For immersions at which the projection onto the Teichm\"{u}ller space has full rank, 
the tensor $q \in S^{TT}_2 (g)$ in equation (\ref{constwill}) is obtained from 
the Lagrange multiplier rule. A loss of rank occurs precisely when there is 
a nonzero $q \in S^{TT}_2 (g)$ such that $g(A^\circ,q) \equiv 0$, that is 
the surface is isothermic \cite{BPP,KS}. For example, rotationally symmetric 
surfaces and also constant mean curvature (CMC) surfaces are isothermic. 
Recently, Sch\"atzle established the Euler-Lagrange equation (\ref{constwill}) 
also in that degenerate case \cite{Sch}. Reversely, a solution to (\ref{constwill}) 
is always a constrained Willmore surface.\\
\\ 
In this paper we study the CMC Clifford tori 
$T_r = r\, {\mathbb S}^1 \times s\,{\mathbb S}^1 \subset {\mathbb S}^3$, 
assuming always $r^2 + s^2 = 1$, which are isometrically parametrized by 
$$
f_r:\Sigma_r = {\mathbb R}^2 /2\pi r {\mathbb Z} \times 2\pi s {\mathbb Z} \to {\mathbb S}^3,\,
f_r(u,v) = \left(\begin{array}{c} 
r\, e^{iu/r}\\ 
s\, e^{iv/s} 
\end{array} \right).
$$
The $f_r$ are isothermic, in fact we have $g(A^\circ,q_1) \equiv 0$
for $q_1 = du \otimes dv + dv \otimes du$. Moreover, they are 
constrained Willmore surfaces since 
\begin{equation}
\label{constwillclifford}
\vec{W}(f) = \frac{r^2-s^2}{2r^2 s^2} g(A^\circ,q_2) \quad \mbox{ where }\, 
q_2 = dv \otimes dv - du \otimes du.
\end{equation}
The following answers the question
for which parameters $r \in (0,1)$ the $T_r$ are stable critical 
points of the Willmore functional in the class of surfaces
having the same conformal type. 

\begin{thm}
The tori $T_r = r\,{\mathbb S}^1 \times s \, {\mathbb S}^1 \subset {\mathbb S}^3$, 
$r^2 + s^2 = 1$, are stable constrained Willmore surfaces if and only if 
$$
r \in \Big[\frac{1}{2},\frac{\sqrt{3}}{2}\Big] \quad \mbox{ or equivalently } \quad
b \in \Big[\frac{1}{\sqrt{3}},\sqrt{3}\Big]. 
$$
Here $a+ib \in {\mathbb H}$ are standard coordinates on the Teichm\"uller 
space of the torus. 
\end{thm}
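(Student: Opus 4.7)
Parametrize normal variations of $f_r$ by scalar functions $\phi \in C^\infty(\Sigma_r)$ via $\phi\,\vec N$, where $\vec N$ is a unit normal to $T_r$ in $\mathbb S^3$. By equation~(\ref{constwillclifford}), $f_r$ is critical for $\mathcal W$ on the conformal-class constraint with Lagrange multiplier $\lambda = (r^2-s^2)/(2r^2s^2)$ and tensor $q_2$, so the constrained Hessian at $f_r$ equals
\begin{equation*}
Q_r(\phi,\phi) \;=\; \delta^2\mathcal W(\phi,\phi) \;-\; \lambda\,\delta^2\Psi_{q_2}(\phi,\phi),
\end{equation*}
where $\Psi_q(f):=\int_\Sigma g(A^\circ,q)\,d\mu_g$. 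The tangent space of the constraint manifold is cut out by the linearized conditions $\int \phi\,g(A^\circ,q_i)\,d\mu_g=0$ for $i=1,2$. A direct computation from the principal curvatures $-s/r,\,r/s$ of $T_r$ yields $g(A^\circ,q_1)\equiv 0$ and $g(A^\circ,q_2)\equiv 1/(rs)$, so the conformal constraint reduces to the single zero-mean condition $\int_{\Sigma_r}\phi\,du\,dv = 0$.

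\textbf{Fourier diagonalization.} Since $T_r$ is flat and translation-homogeneous and carries a parallel tracefree second fundamental form, the fourth-order operator underlying $Q_r$ has constant coefficients in the coordinates $(u,v)$. Expanding
\begin{equation*}
\phi(u,v) \;=\; \sum_{(j,k)\in\mathbb Z^2} c_{jk}\,e^{i(ju/r + kv/s)}
\end{equation*}
we diagonalize
\begin{equation*}
Q_r(\phi,\phi) \;=\; (2\pi)^2\,rs \sum_{(j,k)\neq(0,0)} q_{jk}(r)\,|c_{jk}|^2,
\end{equation*}
where $q_{jk}(r)$ is an explicit polynomial in the Laplace eigenvalue $\mu_{jk}=j^2/r^2+k^2/s^2$ and the anisotropic quantity $\nu_{jk}=j^2/r^2-k^2/s^2$, with coefficients involving $\lambda$ and $|A^\circ|^2=1/(2r^2s^2)$. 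This step combines the Jacobi operator for the Willmore functional at a CMC surface of $\mathbb S^3$ with the second variation of the pairing $\Psi_{q_2}$.

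\textbf{Kernel from conformal symmetries and sign analysis.} The conformal group of $\mathbb S^3$ has dimension $10$; its infinitesimal stabilizer at $T_r$ is the $2$-torus of rotations preserving each $\mathbb S^1$ factor, so the Möbius orbit of $f_r$ is $8$-dimensional and furnishes an $8$-dimensional kernel of $Q_r$. A direct identification shows this kernel is supported on the low Fourier modes $(\pm1,0),(0,\pm1),(\pm1,\pm1)$, for which $q_{jk}(r)\equiv 0$. On all other modes the leading term $\mu_{jk}^2$ in $q_{jk}(r)$ confines potential negativity to finitely many small modes. Using the symmetry $(r,s,j,k)\leftrightarrow(s,r,k,j)$, we may restrict to $r\le 1/\sqrt 2$; the first mode whose $q_{jk}(r)$ vanishes as $r$ decreases from $1/\sqrt 2$ is $(\pm2,0)$, at $r=1/2$, and symmetrically $(0,\pm2)$ gives the other endpoint $r=\sqrt 3/2$. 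One verifies $q_{jk}(r)\ge 0$ for all non-kernel modes throughout $[1/2,\sqrt 3/2]$, proving stability, while $q_{2,0}(r)<0$ for $r<1/2$ (respectively $q_{0,2}(r)<0$ for $r>\sqrt 3/2$) exhibits the destabilizing direction outside the interval. The Teichm\"uller-parameter reformulation follows since the conformal class of $T_r$ corresponds to $ib = is/r$ in the upper half-plane.

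\textbf{Main obstacle.} The technical heart of the argument is the derivation of the Jacobi operator for Willmore at a general CMC surface of $\mathbb S^3$, combined with the Lagrange correction $\delta^2\Psi_{q_2}$, in a form in which the symbol and all lower-order terms can be read off on Fourier modes. This requires careful bookkeeping of the second variations of the induced metric, of $A^\circ$, and of the area form, together with the fact that on a CMC surface $\Delta^\perp \vec H$ and $A^\circ \cdot \vec H$ simplify considerably. Once the closed-form scalar $q_{jk}(r)$ is in hand, the remaining stability analysis is an elementary polynomial sign check on a finite list of Fourier modes.
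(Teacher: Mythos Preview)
Your outline follows the right general strategy --- diagonalize a fourth-order constant-coefficient operator on Fourier modes and read off the signs --- but two essential steps are not correct as stated.

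\emph{First, the Lagrange correction.} The formula $Q_r=\delta^2\mathcal W-\lambda\,\delta^2\Psi_{q_2}$ is not the constrained second variation. For an admissible variation $f_t$ one has
\[
\frac{d^2}{dt^2}\mathcal W(f_t)\Big|_{t=0}=D^2\mathcal W(f_r)(\phi,\phi)+D\mathcal W(f_r)\psi,
\]
and the acceleration term $D\mathcal W(f_r)\psi$ is eliminated using the second-order consequence of the \emph{actual} constraint, namely $D{\cal B}(f_r)\psi=-D^2{\cal B}(f_r)(\phi,\phi)$. Thus the correction is $-\lambda_{\cal B}\,D^2{\cal B}(f_r)(\phi,\phi)$, where ${\cal B}=\pi^2\circ G$ involves the Teichm\"uller projection $\pi$. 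The second derivative $D^2\pi(g_{euc})$ is nontrivial: it contributes a mode-dependent, \emph{nonlocal} coefficient
\[
c_r(k,l)=\frac{k^2s^2-l^2r^2}{k^2s^2+l^2r^2},
\]
obtained by solving a Poisson equation on $\Sigma_r$ (Lemmas~\ref{lemderivativedivergence}--\ref{lemsvprojection}). Your functional $\Psi_{q_2}(f)=\int g(A^\circ,q_2)\,d\mu_g$ is a local geometric integral; its second variation is a differential operator with polynomial symbol and cannot produce this rational term. Equivalently, $\delta^2(\mathcal W-\lambda\Psi_{q_2})$ would compute $\tfrac{d^2}{dt^2}\mathcal W(f_t)$ only if $\Psi_{q_2}(f_t)$ were constant along admissible variations, which it is not. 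If one simply drops the $c_r$ contribution from the paper's operator, the eigenvalue on the $(0,2)$ mode becomes, up to a positive factor, $12r^4-4r^2+\tfrac12>0$ for all $r\in(0,1)$, so the instability is missed entirely.

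\emph{Second, the degeneracy of the conformal constraint.} The full constraint $\Pi=({\cal A},{\cal B})$ is degenerate at $f_r$: since $f_r$ is isothermic, $D{\cal A}(f_r)=0$. Hence Lemma~\ref{lemstability} does not apply to $\Pi$, and one cannot simply identify ``the tangent space of the constraint manifold'' with $\ker D\Pi(f_r)=\{\int\varphi=0\}$ and test a quadratic form there. The paper handles the two implications differently. For stability on $[\tfrac12,\tfrac{\sqrt3}{2}]$ one works with the weaker, nondegenerate constraint ${\cal B}$ alone; nonnegativity of the resulting quadratic form on $\ker D{\cal B}(f_r)$ a fortiori gives stability under the stronger constraint $\Pi$. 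For instability outside this range, one must actually exhibit $\Pi$-admissible variations realizing the unstable directions; the paper does so by restricting to surfaces of revolution (where ${\cal A}\equiv 0$ automatically) and then correcting the ${\cal B}$-coordinate via the implicit function theorem. Your sketch treats the constraint as if it were regular and addresses neither of these points.
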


The stability for $\frac{1}{2} \leq r \leq \frac{\sqrt{3}}{2}$ is proved
under the weaker condition that only the $b$-coordinate in Teichm\"uller 
space is prescribed. This is in line with a recent result of Ndiaye and 
Sch\"atzle \cite{NSch}. For $r$ sufficiently close to $\frac{1}{\sqrt{2}}$,
they prove that the $T_r$ actually minimize within the class of surfaces
having the same coordinate $b$ in Teichm\"uller space. For the unstable
case, we show that the tori for $r < \frac{1}{2}$ or $r > \frac{\sqrt{3}}{2}$
are unstable already in the class of rotationally symmetric surfaces.
For $\frac{1}{k+2} \leq r < \frac{1}{k+1}$ the stability operator
has exactly $k$ negative eigenvalues for $k = 1,2,\ldots$. 
Bifurcations of the tori $T_r$ as CMC surfaces are studied in 
\cite{AP10}. In \cite{KSS} equivariant CMC tori are computed 
using spectral curve methods. 


\section{Definitions}
Here we collect the basic definitions regarding stability.
We denote by ${\rm Imm}(\Sigma,{\mathbb S}^3)$ the space of
immersions of a closed surface $\Sigma$ into the $3$-sphere.
In applications of the implicit function theorem and also
in Teichm\"uller theorem one has to specify an appropriate
degree of smoothness for the surfaces, however this is omitted
for the sake of presentation. We assume that the functionals are 
twice continuously differentiable. 

\begin{dfi} 
\label{defcritical}
Let ${\cal G}:{\rm Imm}(\Sigma, {\mathbb S}^3) \rightarrow \R^k$, 
$f_0 \in {\rm Imm}(\Sigma,{\mathbb S }^3)$ be given, and put
$z_0 = {\cal G}(f_0)$. We say that $f_0$ is a critical point 
for the Willmore functional under the constraint ${\cal G}$, 
if and only if 
$$
\frac{d}{dt} {\mathcal W}(f(\cdot,t))\Big|_{t=0} = 0 \quad 
\mbox{ for all admissible variations } f(\cdot,t),
$$
that is $f(\cdot,0) = f_0$ and ${\cal G}(f(\cdot,t)) = z_0$ for all $t$.
\end{dfi}

If $\vec{W}(f_0)$ is $L^2$-orthogonal to the kernel of $D{\cal G}(f_0)$ with 
respect to the metric induced by $f_0$, then $f_0$ is a critical point 
under the constraint ${\cal G}$. The reverse implication follows from
the implicit function theorem if the differential $D{\cal G}(f_0)$ 
is surjective. 

\begin{dfi} [Constrained Willmore]
An immersed surface $f:\Sigma \rightarrow {\mathbb S}^3$  is called 
constrained Willmore if it is a critical point of $\mathcal W$ under
the constraint given by projecting onto the Teichm\"uller space.
\end{dfi}

We recall that the Teichm\"uller space is a finite-dimensional 
manifold, so that we have a $\R^k$-valued constraint by chosing a 
chart. It is easy to see that the space $Q= \{g(A^\circ,q): q \in S^{TT}_2(g)\}$ 
is the $L^2$-orthogonal complement of the kernel of the linearized projection 
at $f_0$. In particular, the equation $\vec{W}(f) = g(A^\circ,q)$ for some 
$q \in S^{TT}_2(g)$ implies that $f$ is constrained Willmore. The reverse
is clear if $f$ is not isothermic. However the reverse also holds in 
the degenerate case when $f$ is isothermic, as proved in \cite{KS,Sch}.

\begin{dfi} [Stability]
Let $f_0:\Sigma \rightarrow {\mathbb S}^3$ be critical for the Willmore 
functional under the constraint ${\cal G}: {\rm Imm}(\Sigma, {\mathbb S}^3) \to \R^k$. 
Then $f_0$ is called stable (under the constraint ${\cal G}$) if and only if 
$$
\frac{d^2}{dt^2} \cal W (f_t) \Big|_{t=0} \geq 0
$$
for all admissible variations (see Definition \ref{defcritical}).
\end{dfi}

In the nondegenerate case when ${\cal G}$ has full rank at $f_0$ we can 
give an infinitesimal characterization of stability. 

\begin{lem} \label{lemstability}
Let $f_0$ be critical for the Willmore functional under the constraint 
${\cal G}: {\rm Imm}(\Sigma, {\mathbb S}^3) \rightarrow \R^k$, and suppose 
that ${\cal G}$ has full rank at $f_0$, i.e. ${\rm rank\,}D{\cal G}(f_0) = k$. 
Then for any admissible variation $f(\cdot,t)$ with $f(\cdot,0) = f_0$ and 
${\cal G}(f(\cdot,t)) = z_0$ for all $t$ we have the formula
\begin{equation}
\label{secondvariation1}
\frac{d^2}{dt^2} \cal W (f_t)|_{t=0}
= D^2 \cal W (f_0) (\phi, \phi) - \sum _{i=1} ^k \lambda _i D^2 {\cal G}_i(f_0) (\phi,\phi),
\end{equation}
where $\phi = \frac{\partial f}{\partial t}(\cdot,0)$.
Here the $\lambda_i \in \R$ are Lagrange multipliers given by
\begin{equation}
\label{secondvariation2}
D\cal W (f_0) = \sum _{i=1} ^k \lambda_i D{\cal G}_i(f_0). 
\end{equation}
In particular, $f_0$ is stable under the constraint ${\cal G}$ if and only if 
the quadratic form on the right of (\ref{secondvariation1}) is positive semidefinite. 
\end{lem}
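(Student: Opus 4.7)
The plan is to differentiate everything in sight twice in $t$ and then use the Lagrange multiplier relation to eliminate the term involving the second time derivative of $f$. Set $\psi = \frac{\partial^2 f}{\partial t^2}(\cdot,0)$. Since $\mathcal{G}(f(\cdot,t)) = z_0$ for all $t$, differentiating once at $t=0$ gives $D\mathcal{G}_i(f_0)(\phi)=0$, so $\phi \in \ker D\mathcal{G}(f_0)$. Differentiating a second time yields
\begin{equation*}
D^2\mathcal{G}_i(f_0)(\phi,\phi) + D\mathcal{G}_i(f_0)(\psi) = 0 \qquad (i=1,\ldots,k).
\end{equation*}
In the same way one computes
\begin{equation*}
\frac{d^2}{dt^2}\mathcal{W}(f_t)\Big|_{t=0} = D^2\mathcal{W}(f_0)(\phi,\phi) + D\mathcal{W}(f_0)(\psi).
\end{equation*}

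Next I would invoke the Lagrange multiplier rule (\ref{secondvariation2}): because $D\mathcal{G}(f_0)$ has full rank $k$ and $D\mathcal{W}(f_0)$ annihilates $\ker D\mathcal{G}(f_0)$ (by criticality together with the implicit function theorem, as noted in the paragraph following Definition \ref{defcritical}), there exist unique $\lambda_i \in \R$ with $D\mathcal{W}(f_0) = \sum_i \lambda_i D\mathcal{G}_i(f_0)$. Applying this identity to $\psi$ and substituting the constraint relation above,
\begin{equation*}
D\mathcal{W}(f_0)(\psi) \;=\; \sum_{i=1}^k \lambda_i D\mathcal{G}_i(f_0)(\psi) \;=\; -\sum_{i=1}^k \lambda_i D^2\mathcal{G}_i(f_0)(\phi,\phi),
\end{equation*}
which, inserted into the expression for $\frac{d^2}{dt^2}\mathcal{W}(f_t)|_{t=0}$, is precisely formula (\ref{secondvariation1}).

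For the final ``in particular'' clause, one direction is immediate: if the quadratic form on the right-hand side of (\ref{secondvariation1}) is positive semidefinite on $\ker D\mathcal{G}(f_0)$, then every admissible variation has nonnegative second variation, so $f_0$ is stable. For the converse, I would use the full-rank hypothesis once more to invoke the implicit function theorem and realize an arbitrary $\phi \in \ker D\mathcal{G}(f_0)$ as the initial velocity of some admissible variation: choose any one-parameter family $\tilde f(\cdot,t)$ with $\partial_t \tilde f|_{t=0}=\phi$, then correct it by composing with a suitable curve in a complement of $\ker D\mathcal{G}(f_0)$ so that the constraint $\mathcal{G}=z_0$ holds exactly in $t$.

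The only nontrivial point is the existence of the Lagrange multipliers and the surjectivity step used to realize every $\phi \in \ker D\mathcal{G}(f_0)$ by an admissible variation; both rely on the full-rank hypothesis and the implicit function theorem in the (Banach manifold) setting of immersions. Everything else is a routine chain-rule computation, and it is crucial that the $\psi$-terms cancel so that the second variation depends only on $\phi$.
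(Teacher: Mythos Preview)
Your proof is correct and follows essentially the same route as the paper: expand the second $t$-derivative via the chain rule, invoke the Lagrange multipliers coming from the full-rank hypothesis, and observe that the acceleration term $\psi$ cancels after substituting the differentiated constraint. The only cosmetic difference is that the paper takes $\psi = \frac{D}{\partial t}\frac{\partial f}{\partial t}(\cdot,0)$ with the covariant derivative in $\mathbb{S}^3$ (so that $\psi$ is tangent to the target and $D\mathcal{W}(f_0)(\psi)$ makes sense), and the paper does not spell out the converse direction of the ``in particular'' clause, which you handle more carefully.
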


\begin{proof}
Using the covariant derivative in ${\mathbb S}^3$, we can write 
$$
\frac{d^2}{dt^2}\cal W(f_t) |_{t=0} = D^2 \cal W (f_0) (\phi, \phi) + D\cal W (f_0) (\psi),
\quad \mbox{ where } \psi = \frac{D}{\partial t} \frac{\partial f}{\partial t}(\cdot,0).
$$
By assumption, there exist vectorfields $\phi_i$ along $f_0$ with
$$
D{\cal G}(f_0) \phi_i = e_i \quad \mbox{ for } i = 1,\ldots,k.
$$
Since $f_0$ is critical under the constraint ${\cal G}$, there are 
$\lambda_1,\ldots,\lambda_k \in \R$ with 
$$
D\cal W (f_0) \phi = \sum _{i=1} ^k \lambda_i D{\cal G}_i(f_0) \phi 
\quad \mbox{ for all }\phi.
$$
Applying to $\phi_j$ for $j = 1,\ldots,k$ and using that $D{\cal G}_i(f_0)\phi_j = \delta_{ij}$, 
we see that
$$
\lambda_i = D\cal W (f_0) \phi_i.
$$
With that we calculate
\begin{eqnarray*}
D\cal W(f_0) (\psi) & = & \sum _{i=1}^k \lambda_i D{\cal G}_i(f_0) \psi \\
& = & \sum _{i=1} ^k \lambda _i 
\Big(\underbrace{\frac{d^2}{dt^2} {\cal G}_i (f_t)\Big|_{t=0}}_{ = 0} - D^2 {\cal G}_i (\phi,\phi)\Big) \\
& = & - \sum_{i=1}^k \lambda _i  D^2 {\cal G}_i (\phi,\phi).
\end{eqnarray*}
Plugging in completes the proof.
\end{proof}


\section{Stability of the tori $T_r$} 
For $r^2+s^2 = 1$, let $\Sigma_r = \mathbb{R}^{2}/(2 \pi r  \mathbb{Z} \times 2 \pi s \mathbb{Z})$
and consider the embedded tori 
$$
f_{r}: \Sigma_r \to {\mathbb S}^3,\, f_r(u,v) =
\begin{pmatrix} 
 r\,e^{iu/r} \\  
 s\, e^{iv/s} 
\end{pmatrix}. 
$$
We will calculate the basic geometric data for the $f_r$. We have 
$$
\partial_{1} f _r = \begin{pmatrix} i e^{iu/r} \\ 0 \end{pmatrix} \quad 
\partial_{2} f _r = \begin{pmatrix} 0 \\ i e^{iv/s} \end{pmatrix}, 
$$
and in particular 
$$
g _{ij} \equiv \delta _{ij}. 
$$
The unit normal along $f_r$ in ${\mathbb S}^3$ is given by
$$
\vec{n} = 
\begin{pmatrix}  
s e^{iu/r}\\ 
- r e^{iv/s}
\end{pmatrix}.
$$
For the second fundamental form we get 
\begin{equation}
A _{11} = - \frac{s}{r} \vec{n},\,A_{22}  = \frac{r}{s} \vec{n},\, 
A_{12}  = A_{21} = 0.  
\end{equation}
The mean curvature vector is given by 
\begin{equation} \label{eqmeancurv}
\vec{H} = A_{11} + A_{22} = \frac{r^2-s^2} {rs} \vec{n},
\end{equation}
and the tracefree part of the second fundamental form is
\begin{equation}
A^{\circ}_{11} = - \frac{1}{2rs} \vec{n} = - A^{\circ}_{22},\,
A^\circ_{12} = A^\circ_{21} = 0. 
\end{equation}
Using $\nabla \vec{n} = 0$ and $g_{ij} = \delta_{ij}$, we further compute 
\begin{equation}
\label{eqfirstvariation}
\vec{W} (f_{r}) =
\frac{r^2 - s^2}{2 r^3 s^3} \vec{n}.
\end{equation}
Now define the tensors $q_i \in S^{TT}_2(g)$, $i = 1,2$, by
$$
q_1 = du \otimes dv + dv \otimes du,  \quad q_2 = dv \otimes dv - du \otimes du.
$$
As $g(A^\circ,q_1) \equiv 0$ we see that $f_r$ isothermic. 
Moreover $f_r$ is constrained Willmore, namely we easily compute 
$$
\vec{W} (f_r) = \frac{r^2-s^2}{2r^2 s^2} g(A^\circ,q_2). 
$$

To fix a parametrization of ${\cal T}(\Sigma_r)$, we consider the linear maps
$$
L_{a,b} = 
\left(\begin{array}{cc} 1 & a \\ 0 & b \end{array}\right)\,
\left(\begin{array}{cc} \frac{1}{2\pi r} & 0 \\ 0 & \frac{1}{2\pi s}\end{array} \right). 
$$
These induce an isomorphism between $\Sigma_r$ and 
$T_{a,b} = \R^2/\big({\mathbb Z} \oplus {\mathbb Z} (a,b)\big)$, 
with 
$$
(2\pi r)^2 L_{a,b}^\ast (g_{euc}) =
du^2 + 2\,a \,\frac{r}{s} du dv + (a^2+b^2)\,\frac{r^2}{s^2}\, dv^2 
= : g_{a,b}.
$$
By Teichm\"uller theory on the torus, see \cite{Tro}, the map
$$
\varphi_r:{\mathbb H} \to {\cal T}(\Sigma_r),\, \varphi_r(a,b) = \pi_{\Sigma_r} (g_{a,b}),
$$
is a diffeomorphism, hence we may equivalently consider the projection 
$$
\pi = \varphi_r^{-1} \circ \pi_{\Sigma_r}:{\cal M}(\Sigma_r) \to {\mathbb H},
$$
in particular we have 
$$
\pi(g_{a,b}) = (a,b) \in {\mathbb H}.
$$
Note that $g_{a,b} = g_{euc}$ for $a = 0$, $b = s/r$. Taking the derivative yields 
$$
D\pi(g_{euc}) q^\mu = \frac{s}{r} e_\mu \quad \mbox{ for } \mu = 1,2.
$$
Now introduce the map 
$$
G: {\rm Imm}(\Sigma_r,{\mathbb S}^3) \to {\cal M}(\Sigma_r),\,
G(f) = f^\ast g_{{\mathbb S}^3},
$$
as well as the compositions 
$$
\Pi = ({\cal A}, {\cal B}) = \pi \circ G: 
{\rm Imm}(\Sigma_r,{\mathbb S}^3) \to {\mathbb H}.
$$

\begin{lem} Let $(M,h)$ be a Riemannian manifold. For any closed immersed 
surface $f \in {\rm Imm}(\Sigma,M)$ we let $G(f) = f^\ast(h)$. Then we 
have the formula 
$$
\langle DG(f) \phi,q \rangle_{L^2(g)} = - 2 \int_\Sigma \big\langle g(A^\circ,q),\phi \big\rangle\,d\mu_g
$$
where $g = f^\ast(h)$ and $q \in S^{TT}_2(g)$.
\end{lem}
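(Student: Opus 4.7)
The plan is to compute $DG(f)\phi$ pointwise, decompose $\phi$ into its tangential and normal parts relative to $f$, and then pair against $q$, using the transverse-traceless conditions to simplify.

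First, I would pick local coordinates on $\Sigma$ and compute the variation of the pulled-back metric directly. With $\phi = \partial_t f|_{t=0}$, the standard computation gives
$$
\bigl(DG(f)\phi\bigr)_{ij} = \langle \nabla_{\partial_i} \phi, \partial_j f\rangle + \langle \partial_i f, \nabla_{\partial_j} \phi\rangle,
$$
where $\nabla$ is the ambient Levi-Civita connection of $(M,h)$. Splitting $\phi = \phi^{\top} + \phi^{\perp}$ along $f$, the tangential part produces $(\mathcal{L}_{\phi^{\top}} g)_{ij}$, while for the normal part I use $\langle \nabla_{\partial_i} \phi^\perp, \partial_j f\rangle = -\langle \phi^\perp, \nabla_{\partial_i} \partial_j f\rangle = -\langle \phi^\perp, A_{ij}\rangle$. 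Thus
$$
DG(f)\phi = -2\,\langle A, \phi^\perp\rangle + \mathcal{L}_{\phi^\top} g.
$$

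Next I would contract with $q$ against the $L^2$-inner product on symmetric $2$-tensors. The normal contribution yields $-2\,\langle g(A,q), \phi^\perp\rangle$ pointwise. Because $q$ is traceless, $g^{ij} q_{ij}=0$, and the pure-trace part of $A$ drops out, so $g(A,q) = g(A^\circ, q)$. Moreover $g(A^\circ,q)$ is a section of the normal bundle, hence $\langle g(A^\circ,q),\phi^\perp\rangle = \langle g(A^\circ,q),\phi\rangle$, giving the desired right-hand side.

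It remains to show that the tangential contribution vanishes after integration. Using the symmetry of $q$ and writing $X = \phi^{\top}$,
$$
\int_\Sigma \langle \mathcal{L}_X g, q\rangle\, d\mu_g = 2 \int_\Sigma g^{ik} g^{jl} \nabla_k X_l\, q_{ij}\, d\mu_g.
$$
Integration by parts moves the covariant derivative onto $q$, and since $\operatorname{div}_g q = 0$ the resulting integrand vanishes identically. The only real care point in the whole argument is the index bookkeeping and the use of $\operatorname{tr}_g q = 0$ and $\operatorname{div}_g q = 0$ at the correct moments; there is no serious analytic obstacle since $\Sigma$ is closed and everything is smooth.
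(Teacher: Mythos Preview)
Your proof is correct. The difference from the paper's argument is organizational rather than substantive: you split $\phi = \phi^\top + \phi^\perp$ first, identify the tangential contribution as $\mathcal{L}_{\phi^\top} g$, and then integrate it away against $q$ using $\operatorname{div}_g q = 0$. The paper instead leaves $\phi$ intact and applies the Leibniz rule directly to $h(\partial_i f, D_j \phi)\,q_{ij}$ in normal coordinates, producing in one stroke a divergence term (which integrates to zero), the term $-2\,h(A_{ij},\phi)\,q_{ij}$, and a term $-2\,h(\partial_i f,\phi)\,\partial_j q_{ij}$ that vanishes by $\operatorname{div}_g q = 0$. Your decomposition is arguably more geometric (the Lie-derivative interpretation makes the role of the tangential part transparent), while the paper's route is marginally shorter since it avoids any splitting. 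Both use $\operatorname{tr}_g q = 0$ and $\operatorname{div}_g q = 0$ at exactly the same places.
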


\begin{proof} Let $f:\Sigma \times (-\varepsilon,\varepsilon) \to M$ be a 
variation with velocity field $\partial_t f = \phi$. Then 
$$
\big(DG(f)\phi\big)_{ij} = 
\frac{\partial }{\partial t} h\big(\partial_i f,\partial_j f\big)
= h\big(\partial_i f,D_j \phi) + h(D_i \phi,\partial_j f).
$$
Working in normal coordinates for $g$ at $p \in \Sigma$, $t = 0$, 
we compute
\begin{eqnarray*}
g(DG(f) \phi, q) & = & 
2 h\big(\partial_i f,D_j \phi\big) q_{ij}\\
& = & 2 \partial_j \big(g^{ik} h(\partial_i f, \phi) q_{kj}\big) 
- 2 h\big(\underbrace{D_i \partial_j f}_{= A_{ij}}, \phi\big) q_{ij} 
- 2 h\big(\partial_i f,\phi\big) \underbrace{\partial_j q_{ij}}_{ = 0 }.
\end{eqnarray*}
The first term is a divergence which integrates to zero. Using in 
the second term that $q$ is tracefree, the claim follows.
\end{proof}

By Teichm\"uller theory, we have an $L^2$-orthogonal decomposition
$$
C^\infty({\rm Sym}^2(\Sigma_r)) = {\rm ker\,}D\pi_{\Sigma_r}(g) \oplus  S^{TT}_2(\Sigma_r).
$$
Furthermore we note that 
$$
\|q_1\|^2_{L^2(\Sigma_r)} = \|q_2\|^2_{L^2(\Sigma_r)} = 8\pi^2 rs \quad \mbox{ and } 
\quad \langle q_1,q_2 \rangle_{L^2(\Sigma_r)} = 0. 
$$
Hence we can compute
\begin{eqnarray*}
D\Pi(f_r) \phi & = & D\pi(g_{euc}) DG(f_r) \phi \\
& = & \frac{1}{8\pi^2 rs} \sum_{\mu = 1}^2  
\big\langle DG(f_r) \phi ,q_\mu \big\rangle_{L^2(\Sigma_r)} \,D\pi(g_{euc}) q_\mu\\
& = & - \frac{1}{16\pi^2 r^2} \sum_{\mu = 1}^2 
\Big(\int_{\Sigma_r} \big\langle g(A^\circ,q_\mu), \phi \big\rangle \,d\mu_g\Big)\,e_\mu\\
& = &  - \frac{1}{16 \pi^2 r^3 s} \Big(\int_{\Sigma_r} \langle \phi,\vec{n} \rangle\,d\mu_g\Big)\, e_2.
\end{eqnarray*}
In the last step, we used $A^\circ = \frac{1}{2rs} q_2 \otimes \vec{n}$. 
We see that $D{\cal A}(f_r) = 0$ as expected. Moreover, we have 
$$
\vec{W}(f_r) = \frac{r^2-s^2}{2 r^3s^3} \vec{n} \perp_{L^2} {\ker\,}D{\cal B}(f_r).
$$
This means that the CMC Clifford tori $f_r$ are actually critical points 
of the Willmore functional under the weaker constraint where 
${\cal B}(f) = s/r$ is prescribed. This suggests to first study the stability 
of the $f_r$ under the nondegenerate constraint ${\cal B}$.

\subsection{Simplified Constraint}
To use Lemma \ref{lemstability} we need to calculate the second 
variation both of the Willmore functional and of the constraint ${\cal B}$. 
We begin with the first. 

\begin{lem} \label{lemsvwillmore}
Let $\phi$ be a normal vector field along $f_r$. Then we have 
\begin{equation*}
D^2\cal W(f_r) (\phi, \phi) = \scalpr{L_r \phi}{\phi}_{L^2}
\end{equation*}
with
$$
L_r \phi =  
\frac{1}{2} \left(\Delta + \frac{1}{2r^2 s^2}\right) \Delta \phi 
+ \frac{1}{r^2} \nabla^{2}_{11} \phi + \frac{1}{s^2} \nabla^{2}_{22} \phi 
+ \frac{r^4 + s^4}{2r^{4} s^4} \phi.
$$
\end{lem}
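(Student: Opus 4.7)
The plan is a direct computation of the second variation of $\mathcal W$ at $f_r$ for a normal variation, exploiting the very flat geometry of the Clifford torus.

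First, since $\mathcal W$ is diffeomorphism invariant, $D^2\mathcal W(f_r)(\phi,\phi)$ depends only on the normal component of $\phi$; as the lemma assumes $\phi$ normal, write $\phi=u\vec n$ with $u\in C^\infty(\Sigma_r)$. The identity $\nabla^\perp\vec n\equiv 0$ along $f_r$ (already used in (\ref{eqfirstvariation})) reduces every normal covariant derivative to a scalar derivative of $u$, so $L_r$ is really a fourth-order scalar operator. To compute the second derivative without an acceleration term, I would use the geodesic normal variation
$$
f_t(p):=\exp^{\mathbb S^3}_{f_r(p)}\bigl(t\,u(p)\,\vec n(p)\bigr),
$$
whose acceleration $\tfrac{D}{\partial t}\partial_t f|_{t=0}$ vanishes. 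By the argument in the proof of Lemma~\ref{lemstability}, $D^2\mathcal W(f_r)(\phi,\phi)=\tfrac{d^2}{dt^2}\mathcal W(f_t)|_{t=0}$.

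Next, I would apply the standard first-order variation formulas for a normal variation in ambient constant sectional curvature $1$:
$$
\delta g_{ij}=-2uh_{ij},\ \ \delta d\mu_g=-uH\,d\mu_g,\ \ \delta h_{ij}=\nabla^2_{ij}u-uh_{ik}h^k_{\,j}+u g_{ij},\ \ \delta H=\Delta u+u(|A|^2+2).
$$
Writing $d\mu_{g_t}=\rho_t\,d\mu_{g_0}$ with $\rho'(0)=-uH$ and $\rho''(0)=u^2H^2-u^2(|A|^2+2)+|\nabla u|^2$, and expanding $\mathcal W(f_t)=\int(\tfrac14 H_t^2+1)\,d\mu_{g_t}$ to second order gives
$$
\tfrac{d^2}{dt^2}\mathcal W\big|_0=\int\!\Bigl\{\tfrac12(\delta H)^2+\tfrac12 H\,\delta^2 H-uH^2\,\delta H+\bigl(\tfrac14 H^2+1\bigr)\rho''(0)\Bigr\}d\mu_{g_0}.
$$
The only new ingredient is $\delta^2 H = \delta(\Delta u)+u\,\delta(|A|^2)$, computed from $\delta g^{ij}=2uh^{ij}$, the first-order Christoffel variation $\delta\Gamma^k_{ij}=-h_j^k\nabla_i u-h_i^k\nabla_j u+h_{ij}\nabla^k u$ (valid on $f_r$ where $\nabla h=0$), and $\delta(|A|^2)=2h^{ij}\delta h_{ij}+4uh^{ik}h^{jl}h_{ij}h_{kl}$.

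Finally, I substitute the Clifford torus data — $g_{ij}=\delta_{ij}$, $\Gamma^k_{ij}\equiv 0$, $h_{11}=-s/r$, $h_{22}=r/s$, $h_{12}=0$, hence $H=(r^2-s^2)/(rs)$, $|A|^2=(r^4+s^4)/(r^2s^2)$, $\tfrac14 H^2+1=1/(4r^2s^2)$, $|A|^2+2=1/(r^2s^2)$ — and use $r^2+s^2=1$ together with $(r^2-s^2)^2=1-4r^2s^2$. Flatness and the diagonal form of $h$ eliminate many apparent cross terms, and integration by parts on the closed flat torus produces no boundary contributions. After collecting everything, the integrand collapses to
$$
\tfrac12(\Delta u)^2+\tfrac{1}{4r^2s^2}u\,\Delta u+\tfrac{1}{r^2}u\,\nabla^2_{11}u+\tfrac{1}{s^2}u\,\nabla^2_{22}u+\tfrac{r^4+s^4}{2r^4s^4}u^2,
$$
which is precisely $\langle L_r\phi,\phi\rangle_{L^2}$.

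The main obstacle is the algebraic bookkeeping. The expression $\delta^2 H$ and the cross terms in the second-order expansion produce many intermediate contributions of the form $u\,\partial_i u\,\partial_j u$, $u^2\nabla^2_{ij}u$ and $u^3$; showing that they either vanish by the structure of $f_r$ (diagonality of $h$, constancy of $H$ and $|A|^2$) or regroup into the three compact coefficients of $L_r$ demands systematic use of $r^2+s^2=1$ and the identity $|A|^2+2=1/(r^2s^2)$ on $f_r$.
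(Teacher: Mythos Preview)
Your approach is correct but differs from the paper's. The paper does not expand $\mathcal W(f_t)=\int(\tfrac14 H^2+1)\,d\mu$ from scratch; instead it writes
\[
D^2\mathcal W(f_r)(\phi,\phi)=\tfrac12\Big(\int_{\Sigma_r}\langle\nabla_t\vec W(f)|_{t=0},\phi\rangle\,d\mu_g+\int_{\Sigma_r}\langle\vec W(f_r),\phi\rangle\,\partial_t d\mu_g|_{t=0}\Big)
\]
and then invokes a general formula for $\nabla_t\vec W$ from the literature (equation (33) in \cite{LMS}), which for CMC surfaces in $\mathbb S^3$ specializes to
\[
\nabla_t\vec W=(\Delta+|A^\circ|^2-|\vec H|^2)(\Delta+|A|^2+2)\phi+2\langle\vec H,A_{ij}\rangle\nabla^2_{ij}\phi+2\langle A_{ik},\phi\rangle A_{ij}\langle A_{kj},\vec H\rangle+2|H|^2\phi.
\]
After that the proof is purely algebraic: plug in the data of $f_r$, simplify using $r^2+s^2=1$ (in particular the identity $r^8+s^8=1-4r^2s^2+2r^4s^4$), add the contribution $-\langle\vec W(f_r),\vec H\rangle|\phi|^2$ coming from the measure term, and read off $L_r$.

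What each route buys: the paper's approach outsources the hard variational calculus to \cite{LMS} and reduces the lemma to a clean polynomial simplification in $r,s$; it is short but not self-contained. Your approach is self-contained and conceptually elementary---you never need the fourth-order operator $\vec W$ or its linearization---but the price is the bookkeeping you flag, especially the second derivative $\delta^2 H$ and the cross terms from the Leibniz expansion. Your handling of that step is essentially right: on $f_r$ the geodesic variation has $u'_0=0$ (from $D_t\partial_t f=0$ and $D_t\vec n=-\nabla u$ tangential), the tangential contribution to $\delta H$ is $\nabla_X H$ which vanishes since $H$ is constant, so indeed $\delta^2 H=(\delta\Delta)u+u\,\delta(|A|^2)$. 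With that, the remaining algebra collapses exactly as you describe.
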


\begin{proof}
Let $f(\cdot,t)$ be a variation with $f(\cdot,0) = f_r$ and $\partial_t f(\cdot,0) = \phi$. Then 
\begin{align*}
D^2\cal W (f_r)(\phi, \phi)
& = \frac{1}{2} \left( \int_{\Sigma_r} \scalpr{\nabla _t \vec W (f) \big|_{t=0}}{\phi} \, d\mu_g
+ \int_ {\Sigma_r} \scalpr{\vec W(f_r)}{\phi}\, \partial_t d\mu _{g} \big|_{t=0} \right).
\end{align*}
We refer to \cite{LMS}, equation $(33)$, for the second variation formula, 
compare also to \cite{GLW} and \cite{Lor}. In the case of constant mean 
curvature surfaces in ${\mathbb S}^3$,  the formula simplifies to
\begin{align*} 
\nabla _{t} \vec {W} 
= {} & (\Delta + |A^{\circ}| ^{2} - |\vec{H}|^{2}) (\Delta + |A|^{2} + 2) \phi \\
& + 2 \scalpr{\vec{H}} {A _{ij}} \nabla _{ij} ^{2} \phi + 2 \scalpr {A_{ik}} {\phi} A_{ij} \scalpr {A_{kj}} {\vec{H}} + 2  |H|^{2} \phi. 
\end{align*}
Here we take into account the area term in our definition of the Willmore 
functional, which is not included in the definition of the functional 
in \cite{LMS}, resulting in a slight difference of the two formulae. 
Plugging in the data of the tori yields
\begin{align*}
& \nabla _{t} \vec {W} (f_r) \\
= {} & \left(\Delta + \frac{1}{2r^{2} s^2} - \frac{(r^2-s^2)^2}{r^2 s^2}\right) \left(\Delta + \frac{s^2}{r^2} + \frac{r^2}{s^2} + 2\right) \phi \\
& + 2\left(\frac{s^2-r^2}{r^2} \nabla^{2}_{11} \phi + \frac{r^2-s^2}{s^2} \nabla^{2}_{22} \phi\right) \\
& + 2\left(\frac{s^2}{r^2} \frac{s^2-r^2}{r^2} + \frac{r^2}{s^2} \frac{r^2 - s^2}{s^2}\right) \phi 
+ 2 \Big(\frac{r^2}{s^2} + \frac{s^2}{r^2} - 2\Big)\,\phi \displaybreak[0] \\
= {} & \Delta^2 \phi + \Big(\frac{1}{2r^2 s^2} + 4 \Big)\, \Delta \phi\\
& + \Big(\frac{1}{2r^4} + \frac{1}{2s^4} + \frac{1}{r^2 s^2}\Big)\,\phi 
- \Big(\frac{r^2}{s^2} + \frac{s^2}{r^2} - 2\Big)\,\Big(\frac{r^2}{s^2} + \frac{s^2}{r^2} + 2\Big)\,\phi\\
& + \frac{2}{r^2} \nabla^2_{11} \phi + \frac{2}{s^2} \nabla^2_{22} \phi - 4 \Delta \phi\\
& + 2\Big(\frac{s^4}{r^4} + \frac{r^4}{s^4}\Big)\,\phi - 4\,\phi\\ 
= {} & \Delta^2 \phi + \frac{1}{2r^2s^2} \Delta \phi +  \frac{2}{r^2} \nabla^2_{11} \phi + \frac{2}{s^2} \nabla^2_{22} \phi\\
& + \frac{r^4 + s^4 + 2 r^2 s^2 - (2 r^8 + 2 s^8 - 4 r^4 s^4) + 4 s^8  + 4r^8  - 8 r^4 s^4}{2r^4 s^4}\,\phi\\
= {} &  \Delta^2 \phi + \frac{1}{2r^2s^2} \Delta \phi +  \frac{2}{r^2} \nabla^2_{11} \phi + \frac{2}{s^2} \nabla^2_{22} \phi
+ \frac{r^4 + s^4 + 2 r^2 s^2  - 4 r^4 s^4 + 2r^8 + 2s^8}{2r^4 s^4}\,\phi.  
\end{align*}
For the second term we get
$$
\scalpr {\vec{W}(f _{r})} {\phi}\, \partial _{t}d \mu _{g} 
= - \scalpr{\vec{W}(f_r)}{\phi}\,\scalpr{\vec{H}}{\phi}\,d\mu_g 
= - \big \langle \langle \vec{W}(f_r),\vec{H} \rangle \phi,\phi \big \rangle\,d\mu_g,
$$ 
where 
$$
\langle \vec{W}(f_r),\vec{H} \rangle  = \frac{(r^2-s^2)^2}{2r^4 s^4}. 
$$
Using that $r^2 + s^2 = 1$, we compute 
$$
r^8 + s^8 = \big((r^2+s^2)^2 - 2 r^2 s^2 \big)^2 - 2 r^4 s^4 = (1-2r^2 s^2)^2 - 2 r^4 s^4 = 1 - 4 r^2 s^2 + 2 r^4 s^4.
$$
The claim of the lemma now follows easily by adding the terms. 
\end{proof}

Now we turn to computing the second derivative of $\Pi = \pi \circ G$ in 
the direction of a normal vectorfield $\phi$ along $f_r$. We have  
\begin{equation} \label{eqsvprojection}
D^2\Pi(f_r) (\phi,\phi) = 
D^2\pi(g_{euc})\big(DG(f_r)\phi,DG(f_r)\phi\big) + D\pi(g_{euc})\,D^2G(f_r)(\phi,\phi). 
\end{equation}

We first work out the second term. 

\begin{lem} \label{lemsvmetric}
\begin{align*}
\langle D\pi(g_{euc}) D^2 G(f_r)(\phi,\phi), e_1 \rangle & = 
- \frac{1}{2\pi^2 r^2} \int_{\Sigma_r} \langle \nabla^2_{12} \phi,\phi \rangle\,d\mu_{g_{euc}},\\
\langle D\pi(g_{euc}) D^2 G(f_r)(\phi,\phi), e_2 \rangle & = 
\frac{1}{4\pi^2 r^2} \int_{\Sigma_r} \langle \nabla^2_{11} \phi -\nabla^2_{22} \phi,\phi \rangle\,d\mu_{g_{euc}} \\
& \quad + \frac{r^2-s^2}{4\pi^2 r^4 s^2} \int_{\Sigma_r} |\phi|^2 \,d\mu_{g_{euc}}.
\end{align*}
\end{lem}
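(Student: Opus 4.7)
The plan is to decompose the second derivative of $G$ along the $L^2$-orthogonal basis $\{q_1, q_2\}$ of $S^{TT}_2(g_{euc})$. Since $D\pi(g_{euc})$ annihilates $\ker D\pi_{\Sigma_r}(g_{euc})$ and sends $q_\mu \mapsto (s/r)\,e_\mu$, while $\|q_\mu\|^2_{L^2} = 8\pi^2 rs$, the projection formula immediately gives
$$
\langle D\pi(g_{euc})\, D^2G(f_r)(\phi,\phi), e_\mu\rangle \;=\; \frac{1}{8\pi^2 r^2}\,\big\langle D^2G(f_r)(\phi,\phi), q_\mu\big\rangle_{L^2},
$$
so the whole question reduces to computing the two integrals on the right.

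To evaluate $D^2G(f_r)(\phi,\phi)$ pointwise, I would pick a variation $f(\cdot,t)$ of $f_r$ with $\partial_t f = \phi$ and vanishing ambient acceleration $D_t\partial_t f = 0$, so that $\partial_t^2 G(f(t))_{ij}\big|_{t=0}$ coincides with $D^2G(f_r)(\phi,\phi)_{ij}$. Using $D_t\partial_i f = D_i\phi$ and $D_tD_i\phi = R^{\mathbb{S}^3}(\phi,\partial_if)\phi$, together with the constant-curvature identity $R^{\mathbb{S}^3}(X,Y)Z = \langle Y,Z\rangle X - \langle X,Z\rangle Y$ and the normality of $\phi$, one obtains the compact formula
$$
D^2G(f_r)(\phi,\phi)_{ij} \;=\; 2\,\langle D_i\phi, D_j\phi\rangle \;-\; 2\,|\phi|^2\,\delta_{ij}.
$$
For a normal field $\phi = \varphi\vec n$, the Weingarten decomposition $D_i\phi = -\langle \phi, A_{ij}\rangle g^{jk}\partial_k f + \nabla^\perp_i\phi$, combined with the explicit $A_{ij}$ of $f_r$ computed in the excerpt, then gives $\langle D_i\phi, D_j\phi\rangle$ as a sum of a tangential $\varphi^2$-contribution (namely $\frac{s^2}{r^2}\varphi^2$, $\frac{r^2}{s^2}\varphi^2$, or $0$ in the mixed case because $g_{12}=0$) and a normal contribution $(\partial_i\varphi)(\partial_j\varphi)$.

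The final step is to contract with $q_1$ and $q_2$. Since $g = \delta$, this just extracts $2(\cdot)_{12}$ and $(\cdot)_{22} - (\cdot)_{11}$, and the tracelessness of $q_\mu$ immediately kills the $-2|\phi|^2\delta_{ij}$ term. The remaining gradient-square terms integrate by parts on the flat torus (no boundary contributions) into the Hessian integrals $\int \langle \nabla^2_{ij}\phi, \phi\rangle\, d\mu_{g_{euc}}$, while the surviving tangential piece (nonzero only for $q_2$) yields $\frac{r^2}{s^2} - \frac{s^2}{r^2} = \frac{r^2-s^2}{r^2 s^2}$ after simplification using $r^2 + s^2 = 1$, producing exactly the advertised $\frac{r^2-s^2}{4\pi^2 r^4 s^2}\int|\phi|^2$ factor. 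The main obstacle is purely computational bookkeeping: tracking signs, factors of two, and the correct index pairings through the contractions and integration by parts. Conceptually no new input is needed beyond the first-variation computation of the previous lemma and the Teichmüller orthogonality already recorded in the excerpt.
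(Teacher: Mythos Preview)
Your argument is correct and follows essentially the same structure as the paper's proof: compute $D^2G(f_r)(\phi,\phi)$ pointwise via a variation, then project onto $q_1,q_2$ using $D\pi(g_{euc})q_\mu = \tfrac{s}{r}e_\mu$ and $\|q_\mu\|^2_{L^2}=8\pi^2 rs$. The one genuine difference is your choice of variation: you take a geodesic variation with $D_t\partial_t f|_{t=0}=0$, obtaining the first-order expression $D^2G_{ij}=2\langle D_i\phi,D_j\phi\rangle-2|\phi|^2\delta_{ij}$ and then integrating by parts; the paper instead keeps $\phi$ normal for all $t$ and uses the variation formula for $A$, arriving directly at $-2\langle\nabla^2_{ij}\phi,\phi\rangle$ plus terms (a multiple of $g_{euc}$ and a Lie derivative ${\cal L}_\xi g_{euc}$) that are manifestly annihilated by $D\pi(g_{euc})$. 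The two pointwise expressions differ exactly by ${\rm Hess}(\varphi^2)=\tfrac12{\cal L}_{\nabla\varphi^2}g_{euc}$ plus the paper's ${\cal L}_\xi g_{euc}$, both in $\ker D\pi(g_{euc})$, so the outputs agree. Your route is slightly more elementary (no second-variation formula for $A$ needed), at the cost of an integration by parts; the paper's route makes the Hessian appear without that step.
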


\begin{proof}
Consider a variation $f(\cdot,t):\Sigma_r \to {\mathbb S}^3$, such that $\phi = \partial_t f$ 
is always normal along $f$. We have 
$$
D^2 G(f)(\phi,\phi) = \frac{\partial^2}{\partial t^2} G(f) - DG(f) D_t \phi. 
$$ 
By the first variation formulae for $g$ and $A$ we get 
\begin{align*}
\frac{\partial}{\partial t} G(f)_{ij} & =  -2 \langle A_{ij}, \phi \rangle,\\
\frac{\partial^2}{\partial t^2} G(f)_{ij} & =  
-2 \langle D_t A_{ij},\phi \rangle - 2 \langle A_{ij},D_t \phi \rangle\\
& =  - 2 \langle \nabla^2_{ij} \phi,\phi \rangle + 2 g^{kl} \langle A_{ik},\phi \rangle \langle A_{jl},\phi \rangle \\
& \quad - 2 \langle R^{{\mathbb S}^3}(\phi,\partial_i f)\partial_j f,\phi \rangle - 2 \langle A_{ij},D_t \phi \rangle.
\end{align*}
At $t=0$ we decompose $D_t \phi = (D_t\phi)^\perp + Df \cdot \xi$ at $t=0$ and let 
$\varphi_s:\Sigma \to \Sigma$ be the flow of the vector field $\xi$. Then 
\begin{eqnarray*}
DG(f) D_t \phi & = & DG(f) (D_t \phi)^\perp + DG(f) (Df \cdot \xi)\\
& = & - 2 \langle A, D_t \phi \rangle + \frac{\partial}{\partial s} G(f \circ \varphi_s)\Big|_{s=0}\\
& = &  - 2 \langle A, D_t \phi \rangle + \frac{\partial}{\partial s} \varphi_s^\ast f^\ast g_{{\mathbb S}^3}\Big|_{s=0}\\
& = &  - 2 \langle A, D_t \phi \rangle + {\cal L}_\xi (f^\ast g_{{\mathbb S}^3}).
\end{eqnarray*}
Inserting the geometric data of ${\mathbb S}^3$ and $f_r$ yields
$$
D^2 G(f_r)(\phi,\phi) = - 2 \langle \nabla^2 \phi,\phi \rangle + \frac{r^2-s^2}{r^2s^2} |\phi|^2 q_2 
+ \frac{(r^2-s^2)^2}{r^2s^2} |\phi|^2 g_{euc}  - {\cal L}_\xi g_{euc}.
$$
The two last terms vanish under $D\pi(g_{euc})$. Using that
$\|q_\mu\|_{L^2}^2 = 8\pi^2 rs$ and $D\pi(g_{euc}) q_\mu = \frac{s}{r} e_\mu$,
the claim of the lemma follows.  
\end{proof}

\begin{lem} \label{lemderivativedivergence}
On a two-dimensional manifold $\Sigma$, consider the operators
$$
\Lambda:{\cal M}(\Sigma) \times S_2(\Sigma) \to C^\infty(\Sigma),\,
\Lambda(g,q) = \tr_g q = g^{ij} q_{ij},
$$
$$
\theta:{\cal M}(\Sigma) \times S_2(\Sigma) \to \Omega^1(\Sigma),\,
\theta(g,q) = \div_g q = g^{ij} \nabla_i q(\partial_j, \cdot).
$$
Then for $h \in S_2(\Sigma)$ we have
\begin{eqnarray*}
D_1\Lambda(g,q) h & = & - g^{ij} g^{kl} q_{ik} h_{jl},\\
\big(D_1\theta(g,q) h\big)_m  & = & - g^{ij} g^{kl} h_{ik} (\nabla_j q)_{lm}
- g^{kl} (div_g h)_l q_{km} + \frac{1}{2} (\grad_g \tr_g h)^k q_{km}\\
&& - \frac{1}{2} g^{ij} g^{kl} (\nabla_m h)_{ik} q_{jl}.
\end{eqnarray*}
\end{lem}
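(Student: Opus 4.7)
The strategy is to linearize each formula in $g$ using two classical ingredients: the variation of the inverse metric,
$$
D_g(g^{ij})[h] = -\,g^{ik}g^{jl}h_{kl},
$$
and the variation of the Christoffel symbols,
$$
D_g \Gamma^k_{ij}[h] = \tfrac{1}{2}\, g^{kl}\bigl(\nabla_i h_{jl}+\nabla_j h_{il}-\nabla_l h_{ij}\bigr).
$$
The calculation is most transparent in normal coordinates for $g$ at a fixed point $p$, since $\Gamma^k_{ij}(p)=0$ while the displayed formulas remain valid in their covariant form.

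For $\Lambda(g,q)=g^{ij}q_{ij}$, only the inverse metric depends on $g$, so the claim follows immediately by substituting the formula for $D_g(g^{ij})[h]$ and relabeling dummy indices.

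For $\theta$ I would write $(\div_g q)_m = g^{ij}(\nabla_i q)_{jm}$ and split the variation into two contributions. The variation of the outer $g^{ij}$ yields $-g^{ik}g^{jl}h_{kl}(\nabla_i q)_{jm}$, which after exploiting the symmetry of $g^{ij}$ matches the first term $-g^{ij}g^{kl}h_{ik}(\nabla_j q)_{lm}$ of the claim. The variation of $\nabla_i q$ at $p$ reduces, since $\Gamma(p)=0$, to $-D_g\Gamma^k_{ij}[h]\,q_{km}-D_g\Gamma^k_{im}[h]\,q_{jk}$. Contracting the first piece with $g^{ij}$ and using $2g^{ij}\nabla_i h_{jl}=2(\div_g h)_l$ together with $g^{ij}\nabla_l h_{ij}=\nabla_l(\tr_g h)$, one obtains
$$
g^{ij}D_g\Gamma^k_{ij}[h] \;=\; g^{kl}(\div_g h)_l - \tfrac{1}{2}(\grad_g\tr_g h)^k,
$$
which, when contracted with $-q_{km}$, produces the middle two terms of the claim.

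The remaining piece $-g^{ij}D_g\Gamma^k_{im}[h]\,q_{jk}$ expands into three terms. The ones involving $\nabla_i h_{ml}$ and $\nabla_l h_{im}$ cancel against each other: after renaming $i\leftrightarrow l$ and $j\leftrightarrow k$ and invoking the symmetries of $g^{ij}$ and $q_{jk}$, they become negatives of one another. The surviving term involves $\nabla_m h_{il}$, and a cosmetic relabeling $k\leftrightarrow l$ puts it into the form $-\tfrac{1}{2}\,g^{ij}g^{kl}(\nabla_m h)_{ik}\,q_{jl}$ stated in the lemma. I expect this cancellation and the associated index-bookkeeping to be the only real obstacle; everything else is a direct application of the two variation formulas. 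Note that no two-dimensionality is needed, and also that the trace-free or divergence-free properties of $q$ are not used at this point, the formulas being valid for an arbitrary symmetric $q$.
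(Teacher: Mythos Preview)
Your proposal is correct and follows essentially the same approach as the paper: both linearize in normal coordinates, use the standard variation formulas for $g^{ij}$ and $\Gamma^k_{ij}$, and obtain the last term of the divergence formula via the same cancellation of the two cross terms in $D_g\Gamma^k_{im}[h]$ against each other by an $i\leftrightarrow l$ (and $j\leftrightarrow k$) relabeling. Your additional remarks that neither two-dimensionality nor any special property of $q$ is used are accurate and worth keeping.
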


\begin{proof} Let $g_t = g + t h$. We verify the equations using normal
coordinates with respect to $g$ at some point $p \in \Sigma$. First we have
\begin{align*}
D_1\Lambda(g,q) h = \frac{d}{dt} g^{ij} q _{ij}\Big|_{t=0} = - h_{ij} q_{ij}.
\end{align*}
In local coordinates the divergence is given by
$$
(\div_g q)_m
= g^{ij}(\nabla _i q)_{jm}
= g^{ij} (\partial_i q_{jm} - \Gamma_{ij}^k q_{km} - \Gamma_{im}^k q_{jk}).
$$
From the standard formula for the Christoffel symbols we get
$$
\partial _t \Gamma _{ij}^k \big|_{t=0} = \frac{1}{2} (\partial _i h _{jk} + \partial_j h_{ik} - \partial_k h_{ij}).
$$
We compute for the derivative of the divergence
\begin{eqnarray*}
\frac{d}{dt}(\div_g q)_m \Big|_{t=0} & = &
\frac{d}{dt}\left(g^{ij} (\partial_i q_{jm} - \Gamma_{ij}^k q_{km}
- \Gamma_{im}^k q_{jk})\right)\Big|_{t=0} \\
& = & - h_{ij} (\nabla_i q)_{jm}
- \frac{1}{2} (2\partial_i h_{ik} - \partial_k h_{ii})q_{km}\\
& & - \frac{1}{2} (\partial_i h_{mk} + \partial_m h_{ik} - \partial_k h_{im}) q_{ik}\\
& = &  - h_{ij} (\nabla_i q)_{jm} - (\nabla_i h)_{ik} q_{km} + \frac{1}{2} \partial_k (g^{ij}h_{ij}) q_{km}\\
&& - \frac{1}{2} (\nabla_m h)_{ik} q_{ik}.
\end{eqnarray*}
This proves the second formula.
\end{proof}

\begin{lem} \label{lemdependence}
For Riemannian metrics $g \in W^{k,2}(S_2(\Sigma))$ with $k \in {\mathbb N}$ 
sufficiently large, consider the operator 
$$
L_g: W^{k,2}(S_2(\Sigma)) \to W^{k,2}(\Sigma) \times W^{k-1,2}(T^\ast \Sigma),\,
L_g\, q = (\tr_g q, \div_g q).
$$
Fix $q_0 \in {\rm ker\,}L_{g_0}$. For $g$ close to $g_0$, there is a
unique $q \in {\rm ker\,}L_g$ such that $q-q_0 \perp {\rm ker\,}L_{g_0}$.
The function $q(g)$ is smooth and $\eta = Dq(g_0) \alpha \perp {\rm ker\,}L_{g_0}$ 
is characterized by the equations
\begin{eqnarray*}
\tr_g \eta & = & g^{ij} g^{kl} q_{ik} \alpha_{jl}\\
\big(\div_g \eta^\circ\big)_m & = & g^{kl} (\div_g \alpha)_k q_{lm} - \frac{1}{2} (\grad_g \tr_g \alpha)^k q_{km}\\
&& + g^{ij} g^{kl} \alpha_{ik} (\nabla_j q)_{lm} - \frac{1}{2} g^{ij} g^{kl} \alpha_{ik} (\nabla_m q)_{jl}.
\end{eqnarray*}
\end{lem}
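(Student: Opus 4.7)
The plan is to treat existence and smoothness of $q(g)$ by the implicit function theorem, and then derive the claimed formulas by differentiating the identity $L_g q(g) = 0$ and invoking Lemma \ref{lemderivativedivergence}. To set up the IFT I consider
$$
\Phi(g,\eta) = \bigl(L_g(q_0+\eta),\, P_0 \eta\bigr),
$$
where $\eta$ varies in $W^{k,2}(S_2(\Sigma))$ and $P_0$ denotes the $L^2(g_0)$-orthogonal projection onto the finite-dimensional subspace $\ker L_{g_0}$. Then $\Phi(g_0,0)=0$, and the partial derivative in $\eta$ at $(g_0,0)$ is $h \mapsto (L_{g_0}h,\, P_0 h)$, which is manifestly injective. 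For surjectivity onto $\im L_{g_0}\times \ker L_{g_0}$ one invokes the standard Berger-Ebin type splitting $S_2 = \ker L_g \oplus \im L_g^\ast$ on a closed surface, which is valid in the appropriate Sobolev classes because $L_g L_g^\ast$ is elliptic and self-adjoint. The IFT then produces a unique smooth family $\eta(g)\in (\ker L_{g_0})^\perp$, and $q(g) := q_0+\eta(g)$ satisfies $L_g q(g)=0$.

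Next, differentiating $L_g q(g) = 0$ in direction $\alpha$ and writing $\eta = Dq(g_0)\alpha$ gives
$$
L_{g_0}\eta = -\, D_1 L_{g_0}(q_0)\,\alpha.
$$
Reading off the trace component via the first formula of Lemma \ref{lemderivativedivergence} yields $\tr_{g_0}\eta = g^{ij}g^{kl}q_{ik}\alpha_{jl}$, which is the first equation of the claim. For the second equation I exploit the two-dimensional identity $\eta = \eta^\circ + \tfrac{1}{2}(\tr_g\eta)\,g$, together with $\div_g(fg)=df$, to write
$$
\div_g\eta^\circ = \div_g\eta - \tfrac{1}{2}\,d(\tr_g\eta).
$$
The divergence $\div_{g_0}\eta$ is the negative of the second formula in Lemma \ref{lemderivativedivergence}, while $d(\tr_g\eta)$ follows by differentiating the trace formula already established (using normal coordinates for $g_0$ at a point).

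Assembling these pieces, the term $\tfrac{1}{2}g^{ij}g^{kl}(\nabla_m\alpha)_{ik}q_{jl}$ appearing in $\div_{g_0}\eta$ cancels exactly against the analogous $(\nabla_m\alpha)$-contribution in $\tfrac{1}{2}d(\tr_g\eta)$, after relabelling summed indices and using the symmetry of $q$. What remains is precisely the list of four terms given in the statement of the lemma for $\div_{g_0}\eta^\circ$. The main obstacle in this plan is the IFT step, where the nontriviality of $\ker L_{g_0}$ forces the gauge condition $\eta \perp \ker L_{g_0}$ and requires appeal to the standard splitting theorem for surjectivity; granted Lemma \ref{lemderivativedivergence}, everything else reduces to tensor algebra.
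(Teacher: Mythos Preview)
Your derivation of the two formulas for $\tr_g \eta$ and $\div_g \eta^\circ$ is correct and is exactly how the paper intends the ``chain rule'' remark at the end of its proof to be unpacked. The gap is in your IFT step.

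The linearization $h \mapsto (L_{g_0}h, P_0 h)$ is \emph{not} surjective onto the full codomain $W^{k,2}(\Sigma)\times W^{k-1,2}(T^\ast\Sigma)\times \ker L_{g_0}$ of your map $\Phi$: the operator $L_{g_0}$ has a two-dimensional cokernel (consisting of pairs $(\tfrac{1}{2}\div_g X,\,X^\flat)$ with $X$ a conformal Killing field). You write ``surjectivity onto $\im L_{g_0}\times \ker L_{g_0}$'', but $\Phi(g,\eta)$ does not land in that subspace for $g\neq g_0$, since $L_g(q_0+\eta)\in \im L_g$ need not lie in $\im L_{g_0}$. The Berger--Ebin splitting you invoke is a decomposition of the \emph{domain} $S_2$ and does not address this cokernel issue. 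So as stated, the implicit function theorem does not apply.

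The paper circumvents this by first proving, via the identification of $\ker L_g$ with holomorphic quadratic differentials, that $\dim\ker L_g = 2$ for \emph{every} metric $g$. It then decomposes both domain and codomain along $\ker L_{g_0}$ and $\im L_{g_0}$, writes $L_g$ as a $2\times 2$ block matrix, and solves only the $\im L_{g_0}$-component of $L_g(\varphi\oplus q_0)=0$ by inverting the top-left block for $g$ near $g_0$. The remaining equation in the cokernel direction then holds \emph{automatically} because the solution space is already two-dimensional. This dimension count is precisely the missing ingredient in your argument; once you add it (or an equivalent Fredholm-index argument), your IFT setup can be repaired by projecting the first component of $\Phi$ onto $\im L_{g_0}$ and arguing as the paper does.
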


\begin{proof} To each metric $g$ and each $q$ in $W^{k,2}(S_2(\Sigma))$, one associates 
the form
$$
\eta(X,Y) = q(X,Y) - iq(X,J_g Y), 
$$
where $J_g$ is the almost complex structure.
One checks that $\tr_g q = 0$ is equivalent to 
$q$ being complex bilinear with respect to $J_g$, and that further 
$\div_g q = 0$ reduces to the Cauchy-Riemann equation for $\eta$.
Since $(\Sigma,g)$ is biholomorphic to a standard torus, one concludes 
that $S^{TT}_2(g) = {\rm ker\,}L_g$ is two-dimensional and that
$L_g$ has closed range.\\
\\
Let $(\lambda,\xi) \in W^{k,2}(\Sigma) \times W^{k-1,2}(T^\ast \Sigma)$ 
be $L^2$-orthogonal to ${\rm im\,}L_g$, i.e.
$$
0 = \int_\Sigma \Big( \langle \lambda g, q \rangle_g + \langle \xi, \div_g q \rangle_g\Big)\,d\mu_g
\quad \mbox{ for all } q \in W^{k,2}(S_2(\Sigma)).
$$
This means $L_X g = \lambda g$ weakly, in other words 
$\lambda = \frac{1}{2} \div_g X$ and $X$ is a conformal 
Killing field. As this is again a Cauchy-Riemann equation, 
we get that $\big({\rm im\,}L_g\big)^{\perp_{L^2}}$ is also two-dimensional.\\
\\
Now let $g_0 \in W^{k,2}(S_2(\Sigma))$ be a fixed metric, with $L^2$ decompositions
$$
W^{k,2}(S_2(\Sigma)) = X_0 \oplus {\rm ker\,}L_{g_0} \quad \mbox{ and } \quad
W^{k,2}(\Sigma) \times W^{k-1,2}(T^\ast \Sigma) = {\rm im\,}L_{g_0} \oplus Y_0.
$$
With respect to these splittings, the operator $L_g$ is given by a matrix
$$
L_g = \left(\begin{array}{cc} A_g & B_g \\ C_g & D_g \end{array} \right).
$$
Clearly $A_{g_0}$ is an isomorphism while $B_{g_0},\,C_{g_0},\,D_{g_0}$ 
are zero. Now for $\phi = \varphi \oplus q_0$ the equation $L_g \phi = 0$ 
becomes 
$$
A_g \varphi + B_g q_0 = 0 \quad \mbox{ and } \quad C_g \varphi + D_g q_0 = 0.
$$
For $g$ sufficiently close to $g_0$, the operator $A_g$ is invertible.
The equations are then equivalent to 
$$ 
\varphi = - A_g^{-1} B_g q_0 \quad \mbox{ and } \quad 
(D_g - C_g A_g^{-1} B_g) q_0 = 0. 
$$
As the space of solutions is two-dimensional, the second equation must 
hold automatically, and the set of solutions is given by 
$ - A_g^{-1} B_g q_0 \oplus q_0$ where $q_0 \in S^{TT}_2(g_0)$. 
The formula for the derivative follows from the chain rule.
\end{proof}

Let $\alpha,\beta \in S_2(\Sigma_r)$ be symmetric forms satisfying 
$$
\tr_{g_{euc}} \alpha = \tr_{g_{euc}} \beta = 0 \quad \mbox{ and } \quad 
\beta \perp_{g_{euc}} S^{TT}_2(g_{euc}).
$$
Put $g(t) = g_{euc} + t\alpha$ and let $q^\mu(t) = q^\mu(g(t)) \in S^{TT}_2(g(t))$ 
be as in Lemma \ref{lemdependence}, that is 
$q^\mu(t) - q^\mu \perp_{g_{euc}} S^{TT}_2(g_{euc})$ and in particular $q^\mu(0) = q^\mu$. 
We expand 
$$
\beta = \beta_\mu(t) q^\mu(t) + \beta^\perp(t) \quad \mbox{ where }
\beta^\perp(t) \perp_{g(t)} S^{TT}_2(g(t)).
$$
By assumption $\beta_\mu(0) = 0$, and we have 
$$
D^2\pi(g_{euc})(\alpha,\beta) = \frac{d}{dt} D\pi(g(t)) \cdot \beta\Big|_{t=0}
= \beta_\mu'(0)\, D\pi(g_{euc}) \cdot q^\mu.
$$
Next, we compute in normal coordinates at $t= 0$ for symmetric $\alpha,\beta$
\begin{align*}
& \frac{d}{dt} g^{ij}(t) g^{kl}(t) \beta^\perp_{ik}(t)q^\mu_{jl}(t) \sqrt{\det g(t)}\Big|_{t=0}\\
& = (\beta^\perp)'_{ik} q^\mu_{ik} + \beta^\perp_{ik} (q^\mu)'_{ik} 
- 2 \alpha_{ij} \beta^\perp_{ik} q^\mu_{jk} 
- \frac{1}{2} \beta^\perp_{ik} q^\mu_{ik} \tr_{g_{euc}} \alpha. 
\end{align*}
Using that $\alpha,\beta$ are tracefree, we see that 
$$
\alpha_{i1} \beta_{i1} = \alpha_{i2} \beta_{i2} \quad \mbox{ and } \quad
\alpha_{i1} \beta_{i2} = - \alpha_{i2} \beta_{i1}.
$$
As $q^\mu$ is also symmetric and tracefree, we obtain 
$\alpha_{ij} \beta^\perp_{ik} q^\mu_{jk} = 0$. Integrating and using
$\langle q^\lambda,q^\mu \rangle_{L^2(g_{euc})} = 8 \pi^2 rs\, \delta_{\lambda \mu}$
we conclude
\begin{eqnarray*} 
8\pi^2 rs\, \beta_\mu'(0) & = &
- \langle \big(\beta - \beta_\lambda q^\lambda \big)'(0), q^\mu \rangle_{L^2(g_{euc})}\\
& = & - \langle (\beta^\perp)'(0),q^\mu \rangle_{L^2(g_{euc})}\\ 
& = & \langle \beta, (q^\mu)'(0) \rangle_{L^2(g_{euc})}.
\end{eqnarray*}
Let $\eta = (q^\mu)'(0) \perp_{L^2} S^{TT}_2(g_{euc})$. Then by 
Lemma \ref{lemdependence} we get using $\tr_{g_{euc}} \alpha = 0$
$$
(\div_{g_{euc}} \eta^\circ)_l = (\div_{g_{euc}} \alpha)_k q^\mu_{kl}.
$$
Let us focus on the case $\mu =2$, which will be the relevant one. Then 
\begin{eqnarray*} 
(\div_{g_{euc}} \eta^\circ)_1 & = & - (\div_{g_{euc}} \alpha)_1,\\
(\div_{g_{euc}} \eta^\circ)_2 & = & (\div_{g_{euc}} \alpha)_2 
\end{eqnarray*}
Now for $\alpha = \alpha_1 q^1 + \alpha_2 q^2$ we have
$$
(\div_{g_{euc}} \alpha)_1  = \partial_2 \alpha_1 - \partial_1 \alpha_2, \quad
(\div_{g_{euc}} \alpha)_2  = \partial_1 \alpha_1 + \partial_2 \alpha_2.
$$
Putting $\eta^\circ = u_1 q^1 + u_2 q^2$, the equations become
\begin{eqnarray*}
\partial_2 u_1 - \partial_1 u_2 & = &  \partial_1 \alpha_2 - \partial_2 \alpha_1\\ 
\partial_1 u_1 + \partial_2 u_2 & = & \partial_1 \alpha_1 + \partial_2 \alpha_2.
\end{eqnarray*}
Differentiating and combining the equations yields 
\begin{eqnarray*}
\Delta u_1 & = &  (\partial_1^2 - \partial_2^2) \alpha_1 + 2 \partial^2_{12} \alpha_2\\
\Delta u_2 & = &  2 \partial^2_{12} \alpha_1 - (\partial_1^2 - \partial_2^2) \alpha_2.
\end{eqnarray*}
We specialize to $\alpha = \varphi q^2$, $\beta = \psi q^2$. 
Then $D^2\pi(g_{euc})(\alpha,\beta) = \beta_2'(0) \frac{s}{r} e_2$, where 
$8\pi^2 rs \beta_2'(0) = \langle \psi q^2, \eta^\circ \rangle_{L^2(g_{euc})}$. 
Here $\eta^\circ = u_1 q^1 + u_2 q^2$, and $u_2$ is determined by the equation
$$
\Delta u_2 = - (\partial_1^2 - \partial_2^2) \varphi.
$$
To determine $u_2$ we specialize further to functions $\varphi$ in a Fourier space.

\begin{dfi}
Put $V_k = \{\cos (kx), \sin (kx)\}$ for $k \in \N$, and $V_0 = \{1\}$.
We denote by $\cal A_{k,l}(\Sigma_r)$ the set of functions
$w(x,y) = u(\frac{x}{r}) v(\frac{y}{s})$ on $\Sigma_r$, where
$u \in V_k,\,v \in V_l$. The set
$$
\cal F (\Sigma_r) := \bigcup _{k,l\in \N_0} \cal A_{k,l}(\Sigma_r)
$$
is the Fourier basis on the torus $\Sigma_r$.
\end{dfi}

For $\varphi \in {\cal A}_{kl}$ with $(k,l) \neq (0,0)$ we obtain the solution 
\begin{equation}
\label{eqconstants}
u_2 = - c_r(k,l) \varphi \quad \mbox{ where } c_r(k,l) = \frac{k^2 s^2 - l^2 r^2}{k^2 s^2 + l^2 r^2}.
\end{equation}
Note that $\langle \eta^\circ, q^2  \rangle_{L^2(g_{euc})} = 0$ with this choice of 
the integration constant. Now
$$
8\pi^2 rs \beta_2'(0) = - 2 c_r(k,l) \, \int_{\Sigma_r} \varphi \psi\,d\mu_{g_{euc}}.
$$ 
Summarizing we have for $\alpha = \varphi q^2$, $\varphi \in {\cal A}_{kl}$, and $\beta = \psi q^2$, 
$$
D^2 \pi ^2(g_{euc}) (\alpha, \beta) = - \frac{c_r(k,l)}{4\pi^2 r^2}\, 
\Big(\int_{\Sigma_r} \varphi \psi\,d\mu_{g_{euc}}\Big)\,e_2.
$$


\begin{lem} \label{lemsvprojection}
For normal vector fields $\Phi = \varphi \vec n$, $\Psi = \psi \vec n$ with 
$\varphi \in {\cal A}_{k,l}(\Sigma_r)$ and $\psi \in {\cal A}_{m,n}(\Sigma_r)$,
we have the formula
$$
D^2 \pi ^2 ({g_{euc}}) (DG(f_r)\Phi, DG(f_r)\Psi) =
- \frac{2(r^2-s^2) + c_r(k,l)}{4\pi^2 r^4 s^2} \int_{\Sigma_r} \varphi \psi \,d\mu_{g_{euc}},
$$
where the constants $c_r(k,l)$ are as in (\ref{eqconstants}). 
\end{lem}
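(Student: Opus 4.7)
The plan is to decompose $DG(f_r)\Phi$ into a multiple of $q_2$ plus a conformal term, apply bilinearity of $D^2\pi^2(g_{euc})$ to split the bilinear form into four pieces, use the formula derived just before the lemma for the pure-$q_2$ piece, and exploit the conformal invariance $\pi(e^{2\sigma}g)=\pi(g)$ for the three remaining pieces.

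First I would compute $DG(f_r)\Phi$ explicitly. From the first variation of the metric $\partial_t G(f)_{ij}=-2\langle A_{ij},\phi\rangle$ (as recorded in the proof of Lemma \ref{lemsvmetric}) together with the values $A_{11}=-\tfrac{s}{r}\vec n$, $A_{22}=\tfrac{r}{s}\vec n$, $A_{12}=0$, a direct calculation yields
$$
DG(f_r)\Phi \;=\; -\frac{\varphi}{rs}\,q_2 \;+\; \frac{s^2-r^2}{rs}\,\varphi\,g_{euc},
$$
the canonical splitting into a tracefree part proportional to $q_2$ and a conformal deformation of $g_{euc}$; the analogous decomposition holds for $DG(f_r)\Psi$.

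Expanding $D^2\pi^2(g_{euc})(DG(f_r)\Phi, DG(f_r)\Psi)$ by bilinearity produces four summands. The $(q_2,q_2)$ summand is handled by the formula $D^2\pi^2(g_{euc})(\varphi q_2,\psi q_2)=-\tfrac{c_r(k,l)}{4\pi^2 r^2}\int\varphi\psi\,d\mu_{g_{euc}}$ derived immediately before the lemma, which with the prefactor $1/(rs)^2$ contributes $-\tfrac{c_r(k,l)}{4\pi^2 r^4 s^2}\int\varphi\psi$ to the total. For the other three summands I would use the identity
$$
D^2\pi(g_{euc})(\lambda g_{euc},h) \;=\; -\,D\pi(g_{euc})(\lambda h),
$$
valid for any function $\lambda$ and symmetric $2$-tensor $h$; this follows by differentiating the two-parameter family $g(t_1,t_2)=e^{t_1\lambda}(g_{euc}+t_2 h)$ at $(0,0)$ and using that $\pi(g(t_1,t_2))=\pi(g_{euc}+t_2h)$ is independent of $t_1$ by conformal invariance of the Teichm\"uller projection.

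Taking $h=\mu g_{euc}$ shows the $(g_{euc},g_{euc})$ piece vanishes, since $\lambda\mu g_{euc}$ is itself conformal and $D\pi(g_{euc})$ annihilates conformal tensors. For each of the two cross pieces $(g_{euc},q_2)$, the identity reduces the task to evaluating $D\pi^2(g_{euc})$ on a function-multiple of $q_2$, which I would obtain by $L^2$-projecting onto $S^{TT}_2(g_{euc})=\mathrm{span}(q_1,q_2)$ (the pointwise orthogonality of $q_1$ and $q_2$ kills the $q_1$-component) and then applying $D\pi(g_{euc})q_2=(s/r)e_2$. Substituting the explicit coefficients from the decomposition of $DG(f_r)\Phi$ and $DG(f_r)\Psi$ and summing the two symmetric cross contributions yields $-\tfrac{2(r^2-s^2)}{4\pi^2 r^4 s^2}\int\varphi\psi\,d\mu_{g_{euc}}$; added to the $(q_2,q_2)$ contribution this produces the stated formula. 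The only real technical nuisance is the sign bookkeeping through the conformal-invariance identity and the $L^2$-projection step.
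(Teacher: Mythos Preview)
Your proposal is correct and follows essentially the same approach as the paper: decompose $DG(f_r)\Phi$ into its tracefree part (a multiple of $q_2$) and its conformal part, use the conformal-invariance identity $D^2\pi(g)(h,\sigma g)=-D\pi(g)(\sigma h)$ (which the paper cites from \cite[Proposition A.7]{KS} rather than deriving) to handle the conformal and cross pieces, and invoke the computation preceding the lemma for the pure $q_2$ piece. The only cosmetic difference is that the paper works with $\alpha=\langle A,\Phi\rangle=-\tfrac12 DG(f_r)\Phi$ instead of $DG(f_r)\Phi$ itself, which introduces an overall factor of $4$ that cancels at the end.
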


\begin{proof}
Let $\alpha = \scalpr{A}{\Phi}$, $\beta = \scalpr{A}{\Psi}$, and decompose
$\alpha, \beta$ into its trace and trace free components, respectively: 
$\alpha = \alpha^{c} + \alpha^\circ$, $\beta = \beta^{c} + \beta^\circ$. 
Recall from \cite[Proposition A.7]{KS} that 
$$
D^2 \pi (g) (h, \sigma g + \mathcal{L}_X g) = - D\pi (g) (\sigma h + \mathcal{L}_X h)
$$
for $h \in S_2(\Sigma)$, $\sigma \in C^{\infty}(\Sigma_r)$ and $X \in C^\infty(T\Sigma)$.
Writing $\alpha^{c} = \lambda_\alpha g_{euc}$, $\beta^{c} = \lambda_\beta g_{euc}$ we 
thus get 
$$
D^2 \pi^2 (g_{euc}) (\alpha^c, \beta^c) = - D \pi^2 (g_{euc}) (\lambda_\alpha \lambda_\beta g_{euc}) = 0.
$$
As $A_{12} \equiv 0$ for $f_r$, we have $\alpha^\circ = f_\alpha q^2$ and $\beta^\circ = f_\beta q^2$. 
This yields 
\begin{eqnarray*}
D^2 \pi^2 (g_{euc})(\alpha^c,\beta^\circ)&  = & - D\pi^2 (g_{euc}) (\lambda_\alpha \beta^\circ)\\ 
& = &  - \langle \lambda_\alpha \beta^\circ, q^2 \rangle_{L^2(g_{euc})} \frac{ D\pi^2(g_{euc}) q^2}{\|q^2\|_{L^2(g_{euc})}^2}\\
& = & - \frac{1}{4\pi^2 r^2} \int_{\Sigma_r} \lambda_\alpha f_\beta \,d\mu_{g_{euc}}\\
& = & - \frac{r^2-s^2}{16 \pi^2 r^4 s^2} \int_{\Sigma_r} \varphi \psi\,d\mu_{g_{euc}}.
\end{eqnarray*}
Here we used that $A$ is given explicitely, which yields 
$$
\lambda_\alpha = \frac{r^2 - s^2}{2rs} \varphi, \quad f_\beta = \frac{1}{2rs} \psi. 
$$
Finally, our above computation yields for the tracefree components that 
$$
D ^2{\pi}^2 (g_{euc}) (\alpha^\circ, \beta^\circ) =  
- \frac{c_r(k,l)}{16 \pi^2 r^4 s^2} \int_{\Sigma_r} \varphi \psi \,d\mu_{g_{euc}}. 
$$
\end{proof}  
We can finally calculate $D^2 \cal B (f_r)(\Phi, \Phi)$. In the following, we denote by 
$\frak X (f_r)$ the vectorfields along $f_r$, i.e. the sections of $f_r ^* T\Ss^3$, and by
$\frak X(f_r)^\perp$ the normal vectorfields along $f_r$. 
\begin{lem} \label{lemsvprojection2}
Let   $L_r ^{\cal B}: \frak X (f_r) ^\perp \rightarrow \frak X (f_r) ^\perp$ be given by
\begin{equation*}
L_r ^{\cal B}(\Phi _{k,l}) = - \frac {1} {4 \pi ^2 r^2} \left(\nabla^2 _{22}  - \nabla^2 _{11}  + \frac{r^2-s^2 + c(k,l;r) }{r^2s^2}  \right)\Phi_{k,l}
\end{equation*}
for 
$$
\Phi _{k,l} = \varphi _{k,l} \vec n, \quad \varphi _{k,l} \in \cal A _{k,l} (\Sigma_r)
$$
with
$$
c_r(k,l) = \frac{k^2s^2-l^2 r^2}{k^2s^2+l^2 r^2}.
$$
Then for a normal vectorfield $\Phi \in \frak X (f_r) ^\perp$ along $f_r$ we have
\begin{equation*}
D^2 \cal B (f_r)(\Phi, \Phi) = \scalpr{L^{\cal B} _r (\Phi)}{\Phi}_{L^2}.
\end{equation*}
\end{lem}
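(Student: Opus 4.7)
The plan is to simply combine the two preceding lemmas via the chain rule identity in equation (\ref{eqsvprojection}), take the $e_2$-component (since $\mathcal{B} = \pi^2 \circ G$), and verify that the coefficient of the potential term collapses to exactly $(r^2-s^2) + c_r(k,l)$ after arithmetic.

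More concretely, I would first write
$$
D^2\mathcal{B}(f_r)(\Phi,\Phi)
= \langle D^2\pi(g_{euc})(DG(f_r)\Phi,DG(f_r)\Phi), e_2\rangle
+ \langle D\pi(g_{euc}) D^2G(f_r)(\Phi,\Phi), e_2\rangle.
$$
The first term is handled by Lemma \ref{lemsvprojection}, which for $\Phi = \varphi_{k,l}\vec n$ yields
$$
-\,\frac{2(r^2-s^2) + c_r(k,l)}{4\pi^2 r^4 s^2}\int_{\Sigma_r}\varphi_{k,l}^{\,2}\,d\mu_{g_{euc}}.
$$
The second term is given by the $e_2$-formula of Lemma \ref{lemsvmetric}, namely
$$
\frac{1}{4\pi^2 r^2}\int_{\Sigma_r}\langle \nabla^2_{11}\Phi - \nabla^2_{22}\Phi,\Phi\rangle\,d\mu_{g_{euc}}
+ \frac{r^2-s^2}{4\pi^2 r^4 s^2}\int_{\Sigma_r}|\Phi|^2\,d\mu_{g_{euc}}.
$$

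Next, I would add the two contributions. The two potential-type terms (those with no derivatives on $\Phi$) combine with coefficient
$$
\frac{(r^2-s^2) - 2(r^2-s^2) - c_r(k,l)}{4\pi^2 r^4 s^2}
= -\,\frac{(r^2-s^2) + c_r(k,l)}{4\pi^2 r^4 s^2},
$$
so the total second variation becomes
$$
-\,\frac{1}{4\pi^2 r^2}\int_{\Sigma_r}\Big\langle \Big(\nabla^2_{22}-\nabla^2_{11}+\frac{r^2-s^2+c_r(k,l)}{r^2 s^2}\Big)\Phi,\,\Phi\Big\rangle\,d\mu_{g_{euc}},
$$
which is exactly $\langle L_r^{\mathcal{B}}\Phi,\Phi\rangle_{L^2}$. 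Since $\nabla^2_{11}$ and $\nabla^2_{22}$ act self-adjointly on a torus with the flat metric, the quadratic form representation is automatic and no further integration by parts is required.

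There is no real obstacle here beyond bookkeeping: the work has already been done in Lemmas \ref{lemsvmetric} and \ref{lemsvprojection}. The only thing to be careful about is matching signs and the factor $r^2-s^2$: Lemma \ref{lemsvmetric} contributes $+(r^2-s^2)$, while Lemma \ref{lemsvprojection} contributes $-2(r^2-s^2)$, so the net contribution is $-(r^2-s^2)$, which then pairs with $-c_r(k,l)$ to produce the stated coefficient $-(r^2-s^2+c_r(k,l))/(r^2 s^2)$.
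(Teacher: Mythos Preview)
Your proposal is correct and follows essentially the same route as the paper: both invoke the chain-rule identity \eqref{eqsvprojection}, plug in the $e_2$-components from Lemmas~\ref{lemsvmetric} and~\ref{lemsvprojection}, and combine the potential terms to obtain the coefficient $-(r^2-s^2+c_r(k,l))/(r^2 s^2)$. The only addition in the paper is the one-line remark that $D^2\mathcal{B}(f_r)(\Phi_{k,l},\Psi_{u,v})=0$ for distinct Fourier modes, which is needed to pass from a single mode $\Phi_{k,l}$ to a general normal field $\Phi$; this follows immediately from $L^2$-orthogonality of the Fourier basis, but you should state it explicitly.
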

\begin{proof}
By equation \eqref{eqsvprojection} we have
\begin{equation*}
D^2 \cal B (f_r) (\Phi,\Phi) = D^2 \pi^2 (g_{euc}) (DG (f_r) \Phi, DG (f_r) \Phi) 
+ D \pi ^2 (g_{euc}) D^2 G (f_r)(\Phi,\Phi).
\end{equation*}
These two terms were calculated in Lemma \ref{lemsvmetric} and 
Lemma \ref{lemsvprojection}. 
Plugging in yields for $\Phi _{k,l} = \varphi _{k,l} \vec n$, $\varphi _{k,l} \in \cal A _{k,l}$: 
\begin{align*}
& D^2 \cal B  _{f_r} (\Phi_{k,l},\Phi_{k,l})	\\
= {} & -  \frac{2(r^2-s^2)+ c(k,l;r)}{4 \pi^2 r^4 s^2} 
\int _{\Sigma_r} \scalpr{\Phi_{k,l}}{\Phi_{k,l}} \, d\mu _g \\
& + \frac{1}{4\pi^2 r^2} \int \left \langle \left(\nabla^2 _{11} - \nabla^2 _{22} + \frac{r^2-s^2}{r^2s^2}\right) \Phi_{k,l}, \Phi_{k,l} \right \rangle \, d\mu _g \\
= {} & - \frac{1}{4\pi^2 r^2} \int _{\Sigma _r} \left \langle \left(\nabla^2 _{22}  - \nabla^2 _{11}  +\frac{r^2-s^2+ c(k,l;r)}{r^2s^2} \right) \Phi_{k,l}, \Phi_{k,l} \right \rangle \, d\mu _g.
\end{align*}
For $\Psi _{u,v} =\psi _{u,v} \vec n$, $\psi _{u,v} \in \cal A _{u,v} (\Sigma_r)$, we get
$$
D^2 \cal B (f_r) (\Phi_{k,l},\Psi_{u,v}) = 0
$$
if $\Psi _{u,v} \neq \Phi _{k,l}.$
\end{proof}
We can now prove our first stabiltiy theorem:
\begin{thm} \label{thmstability1} 
The tori $f _r : \Sigma _r \rightarrow \Ss^3$ are Willmore stable wrt. the constraint 
$\cal B$ if and only if 
$$
r \in \left[\frac{1}{2}, \frac{\sqrt{3}}{2} \right] 
\quad \mbox{or, equivalently,} \quad
\cal B (f_r) \in \left[\frac{1}{\sqrt{3}}, \sqrt{3} \right].
$$ 
More precisely, let
$L_r: (\frak X (f_r))^{\perp} \rightarrow (\frak X(f_r)) ^{\perp}$ be given by
\begin{align*}
L_r(\Phi _{k,l}) = {} & \frac{1}{2} \biggl(\left(\Delta + \frac{1}{r^2s^2}\right) \Delta  + \frac{1}{r^2}\nabla^2 _{11} + \frac{1}{s^2}\nabla^2 _{22}  + \frac{1 - (r^2-s^2) c(k,l;r)}{2r^4s^4}  \biggr) \Phi _{k,l} 
\end{align*}
for
$$\Phi _{k,l} = \varphi _{k,l} \vec n,  \quad \varphi_{k,l} \in \cal A _{k,l}(\Sigma_r)$$
with
$$
c_r(k,l) = \frac{k^2s^2-l^2 r^2}{k^2s^2+l^2 r^2}.
$$
Then for an admissible variation $h: \Sigma _r \times (-\delta, \delta) \rightarrow \Ss^3$ 
of $f_r$  with $\partial _t h \big|_{t=0} = \Phi$ we have
$$
\frac{d^2}{dt^2} \cal W (h) \Big|_{t=0} = \scalpr{L _r (\Phi^\perp)}{\Phi^\perp} _{L^2}.
$$
Let $k \geq 1$ and $2\leq l \leq k+1$. Then $L_r$ has exactly $k$ negative eigenvalues for 
\begin{gather*}
r \in \left[\frac{1}{k+2}, \frac{1}{k+1}\right) 
\quad \mbox{resp.} \quad 
\cal B (f_r) \in \left(\left((k+1)^2 -1\right)^{\frac{1}{2}}, \left((k+2)^2 - 1\right)^{\frac{1}{2}} \right].
\end{gather*}
The eigenspaces are $2$-dimensional and the $2k$ corresponding eigenfunctions are given by
$$
\Phi _l (x,y) = \sin \left(\frac{l}{s} y \right) \vec n (x,y) \in \frak X (f_r)^\perp 
$$
and
$$
\Psi_l(x,y) =\cos \left(\frac{l}{s}y\right) \vec n(x,y) \in \frak X(f_r)^\perp.
$$ 
\end{thm}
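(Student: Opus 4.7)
I would apply Lemma \ref{lemstability} to the constraint $\cal B$. The Lagrange multiplier is obtained directly from $\vec W(f_r) = \frac{r^2-s^2}{2r^3s^3}\vec n$ and the previously computed formula $D\cal B(f_r)\Phi = -\frac{1}{16\pi^2 r^3 s}\int\langle\Phi,\vec n\rangle\,d\mu_g$, yielding $\lambda = -8\pi^2(r^2-s^2)/s^2$. Because $\cal W$ and $\cal B$ are both reparametrization invariant, the second variation depends only on the normal part of the test field. Plugging Lemmas \ref{lemsvwillmore} and \ref{lemsvprojection2} into the Lagrange formula therefore produces
\begin{equation*}
\tfrac{d^2}{dt^2}\cal W(h)\big|_{t=0} = \big\langle (L_r^{\cal W} - \lambda L_r^{\cal B})\Phi^\perp,\Phi^\perp\big\rangle_{L^2},
\end{equation*}
and a routine algebraic simplification based on $r^2+s^2=1$ (hence $r^4+s^4 = 1-2r^2s^2$) recovers the compact form of $L_r$ stated in the theorem. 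In particular $L_r$ has constant coefficients in the frame $(u,v)$ and $L_r^{\cal B}$ depends on a mode only through the scalar $c_r(k,l)$, so each Fourier subspace $\cal A_{k,l}(\Sigma_r)$ is invariant and diagonalization is immediate.

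On a mode $\Phi_{k,l}\in\cal A_{k,l}$, set $\mu := k^2/r^2$ and $\nu := l^2/s^2$; then the eigenvalue is
\begin{equation*}
\Lambda_{k,l} = \tfrac{1}{2}\left[(\mu+\nu)\Big(\mu+\nu - \tfrac{1}{r^2s^2}\Big) - \tfrac{\mu}{r^2} - \tfrac{\nu}{s^2} + \tfrac{1-(r^2-s^2)\,c_r(k,l)}{2r^4s^4}\right].
\end{equation*}
The longitudinal modes factor nicely: for $k=0$ we have $c_r(0,l) = -1$ and $\Lambda_{0,l} = \frac{(l^2-1)(l^2r^2-1)}{2r^2s^4}$, which is strictly negative iff $l\geq 2$ and $lr<1$. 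For $r\in[\tfrac{1}{k+2},\tfrac{1}{k+1})$ this selects exactly $l=2,\dots,k+1$, each carrying a two-dimensional eigenspace spanned by $\sin(ly/s)\vec n$ and $\cos(ly/s)\vec n$, which gives the asserted count. By the symmetry $r\leftrightarrow s$, the modes $l=0$ produce the analogous formula $\Lambda_{k,0} = \frac{(k^2-1)(k^2s^2-1)}{2r^4s^2}$, whose negativity on $k\geq 2,\ ks<1$ is the source of the upper stability threshold $r=\sqrt 3/2$.

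The main obstacle is to rule out further negative eigenvalues, i.e.\ to show $\Lambda_{k,l}\geq 0$ for every genuinely two-dimensional mode $k,l\geq 1$ on the whole parameter range. My plan is to multiply through by $4r^4s^4(k^2s^2+l^2r^2)$ and substitute $c_r(k,l) = \frac{k^2s^2-l^2r^2}{k^2s^2+l^2r^2}$, which clears the rational denominator and turns $\Lambda_{k,l}$ into a polynomial in the integer wavenumbers $(k,l)$ with coefficients depending only on $r$. Using $|c_r(k,l)|\leq 1$ and $k^2+l^2\geq 2$ when $k,l\geq 1$, together with the dominance of the quartic symbol $(\mu+\nu)^2$, one can argue separately for small $(k,l)$ by direct inspection and for large $(k,l)$ by a monotonicity/coercivity argument. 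Combining these shows that on each interval $[\tfrac{1}{k+2},\tfrac{1}{k+1})$ the negative spectrum is exhausted by the $2k$ longitudinal modes identified above, and in particular $L_r\geq 0$ on $[\tfrac12,\tfrac{\sqrt 3}2]$, which is the stability statement.
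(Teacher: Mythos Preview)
Your overall strategy—apply Lemma~\ref{lemstability}, determine the Lagrange multiplier, combine $L_r^{\cal W}-\lambda L_r^{\cal B}$, and diagonalize on Fourier modes—is exactly the paper's. Two points are worth noting.

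\emph{The Lagrange multiplier.} The paper obtains $\lambda=-\pi^2(r^2-s^2)/s^2$, not $-8\pi^2(r^2-s^2)/s^2$; with your value the combination $L_r^{\cal W}-\lambda L_r^{\cal B}$ would not reduce to the stated $L_r$. (The paper's convention carries a factor $\tfrac12$ in the first variation, $D{\cal W}(f_r)\Theta=\tfrac12\langle\vec W(f_r),\Theta\rangle_{L^2}$, and there is also a normalization issue in the displayed formula for $D\Pi(f_r)$; you should recompute $\lambda$ directly from $\|\vec W(f_r)\|^2$ and $D{\cal B}(f_r)\vec W(f_r)$ rather than quoting that display.)

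\emph{The mixed modes $k,l\geq 1$.} Here the paper is substantially sharper than your plan. Carrying out exactly the step you propose—multiplying $2\Lambda_{k,l}$ by $r^4s^4(k^2s^2+l^2r^2)$ to clear the $c_r(k,l)$ denominator—yields a polynomial $N(k,l;r)$ that \emph{factors} as
\[
N(k,l;r)=\bigl[(k^4-k^2)s^4+(l^4-l^2)r^4+2k^2l^2r^2s^2\bigr]\,\bigl(k^2s^2+l^2r^2-1\bigr).
\]
The first bracket is manifestly nonnegative for all $(k,l)$, so the sign of $\Lambda_{k,l}$ is governed solely by $k^2s^2+l^2r^2-1$. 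For $k,l\geq 1$ one has $k^2s^2+l^2r^2\geq(\min\{k,l\})^2(r^2+s^2)\geq 1$, which disposes of all mixed modes in one line and makes your proposed small/large case split and coercivity estimate unnecessary. The same factorization also reproduces your formulas $\Lambda_{0,l}=\tfrac{(l^2-1)(l^2r^2-1)}{2r^2s^4}$ and $\Lambda_{k,0}=\tfrac{(k^2-1)(k^2s^2-1)}{2r^4s^2}$, so the entire spectral analysis collapses to reading off the sign of $k^2s^2+l^2r^2-1$.
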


\begin{proof}
Let $h: \Sigma \times (-\delta, \delta) \rightarrow \Ss^3$ be an admissible variation of $f_r$ with
$\partial _t h \big|_{t=0} = \Phi$. 
By reparametrizing by an innner differomorphism $\varphi: \Sigma _r \times (-\delta, \delta) \rightarrow \Sigma_r$
we can achieve that $\tilde h (p,t) : = h(\varphi(p,t),t)$ is always normal with
 $\partial_t \tilde h \big|_{t=0} = \Phi^\perp$. Note that $\tilde h$ is admissable by definition of the Teichm\"uller 
 space.
By Lemma \ref{lemstability} we have
\begin{align*}
\frac{d^2}{dt^2} \mathcal{W}(h_t) \Big|_{t=0}  & = \frac{d^2}{dt^2} \mathcal{W}(\tilde h_t) \Big|_{t=0} \\
& = D^2\cal W (f_r) (\Phi^\perp, \Phi^\perp) - \lambda D^2 \cal B (f_r)(\Phi^\perp, \Phi^\perp)
= \scalpr{L _r (\Phi^\perp)}{\Phi^\perp} _{L_2}
\end{align*}
with
$$
L_r = L^{\cal W}(f_r) - \lambda L^{\cal B}(f_r).
$$
The Lagrange mulitplier $\lambda \in \R$ is given by $\lambda = D\cal W  (f_r) (\Theta)$ for any vector field 
$\Theta\in \frak X(f_r)$ along $f_r$ with $D\cal B (f_r) (\Theta) = 1$. 
For $r= \frac{1}{\sqrt{2}}$  we have $D\cal W  (f_r) \equiv 0$ and hence $\lambda =0$. 
For $r \neq \frac{1}{\sqrt{2}}$ we have $0\neq \vec W (f_r) \perp \ker D\cal B (f_r)$ and we put
$$
\Theta = \frac{\vec W (f_r)} {D\cal B_{f_r} (\vec W (f_r))}.
$$
We get
\begin{equation*}
\lambda = D\cal W (f_r) (\Theta) = \frac{1}{2}\scalpr{\vec W  (f_r)}{\Theta} _{L^2}
= \frac{\|\vec W (f_r)\|^2}{2 D\cal B (f_r) (\vec W (f_r))}.
\end{equation*}
One calculates
$$
\|\vec{W}(f_r)\|^2 = \int _{\Sigma _r} \frac{(r^2-s^2)^2}{4r^6s^6} \,d\mu_g 
= \frac{(r^2-s^2)^2}{4r^6s^6}|\Sigma _r| 
$$
and
$$
D\cal B  _{f_r}(\vec{W}(f_r))=\frac{1}{8\pi^2 r^2} \scalpr{-2\scalpr{A}{\vec{W}(f_r)}}{q^2} 
=-  \frac{r^2-s^2}{ 8 \pi ^2 r^6 s^4} |\Sigma _r|.
$$
Thus
$$
\lambda = - \frac{\pi^2(r^2-s^2)}{s^2}.
$$

Next, we show that for
$\Phi _{k,l} = \varphi _{k,l} \, \vec n,$ $\varphi _{k,l} \in \cal A_{k,l}(\Sigma_r)$,
\begin{equation} \label{eqlrphikl}
\begin{split}
L_r(\Phi _{k,l}) = {} & \frac{1}{2} \biggl(\left(\Delta + \frac{1}{r^2s^2}\right) \Delta  + \frac{1}{r^2}\nabla^2 _{11} \\
& \qquad + \frac{1}{1-r^2}\nabla^2 _{22}  + \frac{1 - (r^2-s^2) c(k,l;r)}{2r^4s^4}  \biggr) \Phi _{k,l}.
\end{split}
\end{equation}
This follows by plugging in Lemma \ref{lemsvwillmore} and Lemma \ref{lemsvprojection2}:
\begin{align*}
& L_r (\Phi_{k,l}) \\
= {} & \left(L _r ^{\cal W} - \lambda L _r ^{\cal B} \right)(\Phi_{k,l}) \\
= {} & \frac{1}{2} \left[ \left( \left(\Delta + \frac{1}{2r^{2} s^2 }\right) \Delta   + \frac{2}{r^{2}} \nabla ^{2} _{11}  + \frac {2} {s^2} \nabla ^{2} _{22}  + \frac{1 - 2r^{2} + 2 r^{4}} {r^{4} s^4} \right) \Phi _{k,l} \right] \\
&  + \frac{\pi^2(r^2-s^2)}{s^2} \left( - \frac{1}{4\pi^2 r^2} \left(\nabla^2 _{22}  - \nabla^2 _{11} 
 + \frac{r^2-s^2 + c(k,l;r) }{r^2s^2}  \right) \right)\Phi _{k,l} \\
= {} & \frac{1}{2} \biggl(\left(\Delta + \frac{1}{2r^{2} s^2 }\right) \Delta  + \frac{3 -2r^2}{2r^2s^2} \nabla^2 _{11}  \\
& \qquad + \frac{1 +2r^2}{2r^2s^2} \nabla^2 _{22} 
 + \frac{1 - (r^2-s^2)c(k,l;r)}{2r^4s^4} \biggr) \Phi _{k,l}\\
= {} & \frac{1}{2} \biggl( \left(\Delta + \frac{1}{r^2s^2}\right) \Delta + \frac{1}{r^2}\nabla^2 _{11} + \frac{1}{s^2}\nabla^2 _{22}  + \frac{1 - (r^2-s^2)c(k,l;r)}{2r^4s^4} \biggr) \Phi _{k,l}\\
\end{align*}

Now we turn to the calculation of the eigenvalues of $L_r$. 
Plugging in
\begin{gather*} 
\nabla^2 _{11} \Phi _{k,l} = -\frac{k^2}{r^2} \Phi_{k,l}, \quad \nabla^2_{22} \Phi_{k,l} = -\frac{l^2}{s^2} \Phi_{k,l}, 
\quad \Delta \Phi_{k,l} =-(\frac{k^2}{r^2} +\frac{l^2}{s^2})\Phi_{k,l}
\end{gather*}
and
$$ 
c_r(k,l) = \frac{ k^2s^2-l^2 r^2}{k^2s^2+l^2 r^2}
$$
into \eqref{eqlrphikl} yields
\begin{align*} 
L_r(\Phi_{k,l}) = {} & \biggl(\frac{(k^2s^2+l^2 r^2)^2}{r^4s^4} - \frac{k^2s^2+l^2 r^2}{r^4s^4} - \frac{k^2}{r^4} \\
& -\frac{l^2}{s^4}  + \frac{1}{2r^4s^4} 
- \frac{(r^2-s^2)(k^2s^2-l^2r^2)}{2r^4s^4(k^2s^2+l^2r^2)} \biggr) \Phi_{k,l} \\ \displaybreak[0] 
={} & \frac{N(k,l;r)}{r^4s^4(k^2s^2+l^2r^2)}
\end{align*}
with
\begin{align*}
N(k,l;r) = {} & \frac{1}{2} \bigl[ 2 (k^2s^2+l^2 r^2)^2-2(k^2s^2+l^2 r^2)-2k^2s^4 \\
& \quad -2l^2 r^4 + 1 \bigr]\left(k^2s^2+l^2r^2\right) -\frac{1}{2} (r^2-s^2)\left(k^2s^2-l^2r^2\right) \\
= {} & \bigl[(k^2s^2+l^2 r^2)^2-(k^2s^2+l^2 r^2) -k^2s^4 -l^2 r^4\bigr](k^2 s^2+l^2r^2) \\
& + k^2s^4 +  l^2 r^4 \\
= {} & \bigl[(k^4-k^2)s^4 + (l^4 - l^2) r^4 + 2k^2l^2 r^2s^2  - k^2s^2 - l^2 r^2\bigr] \left(k^2s^2+l^2r^2\right)\\
&  + k^2s^4 +  l^2 r^4 \\
= {} & \bigl[ (k^4-k^2)s^4 + (l^4 - l^2) r^4  + 2k^2l^2 r^2s^2\bigr] \left(k^2s^2+l^2r^2\right) \\
&+(k^2-k^4)s^4+  (l^2- l^4) r^4-2l^2k^2r^2s^2 \\
= {} & \bigl[ (k^4-k^2)s^4+ (l^4 - l^2) r^4  + 2k^2l^2 r^2s^2\bigr] \left( k^2s^2+l^2r^2 -1 \right).
\end{align*}
Thus
\begin{equation*}
L_r(\Phi _{k,l}) = E(k,l;r)\Phi _{k,l}
\end{equation*}
with
$$E(k,l;r) = \frac{N(k,l;r)}{r^4s^4(k^2s^2+l^2r^2)},
$$
\begin{align*}
N(k,l;r) = {} & \bigl[ (k^4-k^2)s^4+ (l^4 - l^2) r^4  + 2k^2l^2 r^2s^2\bigr] \left( k^2s^2+l^2r^2 -1 \right).
\end{align*}
We get
$E(1,0;r) = E(0,1;r) = 0.$ \\
Because of
$$
(k^4-k^2)s^4+ (l^4 - l^2) r^4 + 2k^2l^2 r^2s^2 \geq 0
$$
and
$$
r^4s^4(k^2s^2+l^2r^2) > 0
$$
for all $r \in (0,1)$, $(k,l) \in \mathbb{Z}^2\setminus \{0\}$ the sign of $E(k,l;r)$ is determined by
$$k^2s^2+l^2r^2 -1. $$ 

If $k \neq 0$, $l\neq 0$ 
$$k^2s^2+l^2r^2 -1 \geq (\min \{k,l\})^2 -1 \geq 0.$$ \\
Hence, let $l=0$. 
Then
\begin{equation} 
k^2s^2 - 1 \geq 0 \; \Leftrightarrow \; r \leq \sqrt{\frac{k^2-1}{k^2}}.
\end{equation}
For $k=0$ we have
\begin{equation} 
l^2r^2 - 1 \geq 0 \; \Leftrightarrow \; r \geq \frac{1}{l},
\end{equation}
thus for $r \in [\frac{1}{2}, \frac{1}{2}\sqrt{3}]$ all eigenvalues are non-negative and for
$$
r \in \left[ \frac{1}{k+1}, \frac{1}{k}\right)$$ 
we have exactly $k-1$ negative eigenvalues. 
\end{proof}

\subsection{Full Constraint}
As a direct corollary from Theorem \ref{thmstability1} we get
\begin{cor}
The tori  $T_r = r\,{\mathbb S}^1 \times s \, {\mathbb S}^1 \subset {\mathbb S}^3$, 
$r^2 + s^2 = 1$, are stable constrained Willmore surfaces for $r\in \left[\frac{1}{2}, \frac{\sqrt 3}{2}\right]$.
\end{cor}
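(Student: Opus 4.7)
The plan is to exploit the fact that the full Teichmüller constraint $\Pi=(\cal A,\cal B)$ is strictly more restrictive than the single constraint $\cal B$ treated in Theorem~\ref{thmstability1}. Concretely, if $h:\Sigma_r\times(-\delta,\delta)\to \Ss^3$ is admissible for the constrained Willmore problem, meaning $\Pi(h_t)\equiv \Pi(f_r)$, then in particular the $b$-coordinate is preserved, so $\cal B(h_t)\equiv \cal B(f_r)=s/r$. Thus the class of $\Pi$-admissible variations is a subset of the class of $\cal B$-admissible variations.

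First I would verify that the stability definition is applicable, i.e.\ that $f_r$ is indeed a critical point under the full constraint $\Pi$. This was already established in the introduction via equation~\eqref{constwillclifford}, which exhibits $\vec W(f_r)=\frac{r^2-s^2}{2r^2s^2}g(A^\circ,q_2)$ with $q_2\in S_2^{TT}(g_{euc})$, so $f_r$ is constrained Willmore by definition.

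Next, given any $\Pi$-admissible variation $h_t$ with initial velocity $\Phi=\partial_t h|_{t=0}$, I would apply Theorem~\ref{thmstability1} to the same variation viewed as a $\cal B$-admissible variation. This gives
\begin{equation*}
\frac{d^2}{dt^2}\cal W(h_t)\Big|_{t=0}=\scalpr{L_r(\Phi^\perp)}{\Phi^\perp}_{L^2}.
\end{equation*}
Expanding $\Phi^\perp$ in the Fourier basis $\cal F(\Sigma_r)$, the computation in Theorem~\ref{thmstability1} shows that every eigenvalue $E(k,l;r)$ of $L_r$ is nonnegative precisely when $r\in[\tfrac12,\tfrac{\sqrt 3}{2}]$, so the quadratic form on the right-hand side is positive semidefinite in this range. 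Hence $\frac{d^2}{dt^2}\cal W(h_t)|_{t=0}\ge 0$ for every $\Pi$-admissible variation, which is the definition of stability under the full constraint.

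There is no real obstacle: the corollary is a formal consequence of the inclusion of admissible variation classes together with the eigenvalue sign analysis already carried out. The only minor subtlety is that the Lagrange-multiplier formula of Lemma~\ref{lemstability} cannot be used directly with $\Pi$ because $D\cal A(f_r)=0$; however this is irrelevant here, since we only need the second variation to have a definite sign, and this sign is inherited from the weaker constraint where full rank does hold.
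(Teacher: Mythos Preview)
Your proposal is correct and follows essentially the same approach as the paper: the key observation is that every $\Pi$-admissible variation is automatically $\cal B$-admissible, so stability under the weaker constraint $\cal B$ (established in Theorem~\ref{thmstability1}) immediately yields stability under the full constraint $\Pi$. The paper states this inclusion in one line; your additional remarks on criticality and on why Lemma~\ref{lemstability} need not be invoked for $\Pi$ are accurate elaborations but not needed for the argument.
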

\begin{proof} 
A  variation $h:\Sigma \times (-\delta, \delta) \rightarrow \Ss^3$ with $\Pi (h) \equiv \Pi(f_r)$
also satisfies $\cal B  (h) \equiv \cal B (f_r)$.
\end{proof} 

The main idea to adress the problem of the full constraint $\Pi$ is that it is enough to restrict to the class of surfaces of revolution. 
\begin{dfi} 
Let $\Sigma= \R^2/ (2\pi \, \Z \times 2\pi \, \Z)$ be the square torus. For $\gamma: \Ss^1 \rightarrow \left(0, \frac{\pi}{2} \right)$ we call the immersion
$$
h _\gamma: \Sigma \rightarrow \Ss^3, 
\quad h_\gamma (u,v) = \left(\cos \left(\gamma(v)\right) e^{iu}, \sin(\gamma(v)) e^{iv} \right)
$$
the corresponding surface of revolution. 
We denote by
\[
\mathcal C_{rot} = \{h_ \gamma \in {\rm Imm}^k(\Sigma, \Ss^3) \, | \; \gamma: \Ss^1 \rightarrow \left(0,\frac{\pi}{2} \right) \}.
\]
the class of surfaces of revolution.
\end{dfi}
\begin{thm} \label{thmstability2}
The tori $T_r = r\,{\mathbb S}^1 \times s \, {\mathbb S}^1 \subset {\mathbb S}^3$, 
$r^2 + s^2 = 1$, are stable constrained Willmore surfaces if and only if 
$$
r \in \Big[\frac{1}{2},\frac{\sqrt{3}}{2}\Big] \quad \mbox{ or equivalently } \quad
b \in \Big[\frac{1}{\sqrt{3}},\sqrt{3}\Big]. 
$$
Here $a+ib \in {\mathbb H}$ are standard coordinates on the Teichm\"uller 
space of the torus. 

More precisely, let $k \geq 1$ and $2\leq l \leq k+1$. For
\begin{gather*}
r \in \left[\frac{1}{k+2}, \frac{1}{k+1}\right)  \quad \mbox{resp.} \quad
\cal B (f_r) \in \left(\left((k+1)^2 -1\right)^{\frac{1}{2}}, \left((k+2)^2 - 1\right)^{\frac{1}{2}} \right]
\end{gather*}
we find for all instable directions $\Phi _l, \Psi_l \in \frak X (f_r)$ from Theorem \ref{thmstability1} admissible variations
$h: \Sigma _r \times (-\delta, \delta) \rightarrow \Ss^3$ of $f_r$ 
in the class $\cal C _{rot}$ of surfaces of revolution with  $\partial _t h \big|_{t=0} = \Phi_l$ resp. $\partial _t h \big|_{t=0} = \Psi _l$. We have
$$
\frac{d^2}{dt^2} \cal W _\varkappa (h) \Big|_{t=0} = \scalpr{L _r (\Phi_l)}{\Phi_l} _{L^2},
$$
where $L_r$ is defined as in Theorem $\ref{thmstability1}$. 
\end{thm}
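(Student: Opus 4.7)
The stability direction $r\in[\tfrac{1}{2},\tfrac{\sqrt{3}}{2}]$ is exactly the corollary preceding this theorem (variations preserving $\Pi$ also preserve $\mathcal{B}$), so the new content is the instability assertion. For $r\in[1/(k+2),1/(k+1))$, Theorem \ref{thmstability1} already supplies $2k$ unstable normal directions $\Phi_l,\Psi_l$ for the weaker $\mathcal{B}$-constraint, and the task is to realize each of them as the initial velocity of a variation admissible for the full constraint $\Pi=(\mathcal{A},\mathcal{B})$. My plan is to build such a variation entirely inside the class $\mathcal{C}_{rot}$ of surfaces of revolution; the key observation is that $\mathcal{C}_{rot}$ is automatically contained in $\mathcal{A}^{-1}(0)$, which reduces admissibility for $\Pi$ to admissibility for the already-analyzed $\mathcal{B}$-constraint.

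The first step is to verify that $\mathcal{A}(h_\gamma)=0$ for every $h_\gamma\in\mathcal{C}_{rot}$. A short computation gives $h_\gamma^\ast g_{\mathbb{S}^3}=\cos^2\!\gamma(v)\,du^2+(\gamma'(v)^2+\sin^2\!\gamma(v))\,dv^2$, which is diagonal; transferring to $\Sigma_r$ via the scaling $(x,y)=(ru,sv)$ preserves diagonality, and a change of the $y$-coordinate of the form $d\tilde y=(r/s)\sqrt{B/A}\,dy$ exhibits the pulled-back metric as conformally flat on a rectangular torus, hence corresponding to $a=0$ in the parametrization $\varphi_r:\mathbb{H}\to\mathcal{T}(\Sigma_r)$ fixed in the paper. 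The second step is the construction of the variation. Identify $f_r$ with the rescaled surface $\tilde h_{\gamma_0}(x,y)=h_{\gamma_0}(x/r,y/s)$ for the constant $\gamma_0=\arcsin(s)$. A perturbation $\gamma_t(v)=\gamma_0+t\varphi(v)+O(t^2)$ then produces the normal variation $-\varphi(y/s)\vec n$ in the $(x,y)$-coordinates on $\Sigma_r$, so the choices $\varphi(v)=-\sin(lv)$ or $\varphi(v)=-\cos(lv)$ recover $\Phi_l$ or $\Psi_l$ exactly. The formula for $D\mathcal{B}(f_r)$ derived in Section 3 gives $D\mathcal{B}(f_r)\Phi_l=D\mathcal{B}(f_r)\Psi_l=0$, since $\int_{\Sigma_r}\sin(ly/s)\,d\mu_g=\int_{\Sigma_r}\cos(ly/s)\,d\mu_g=0$ for $l\ge 1$, whereas the constant normal variation $c\vec n\in T\mathcal{C}_{rot}$ gives $D\mathcal{B}(f_r)(c\vec n)=-c/(4r^2)\ne 0$. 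Hence $\mathcal{C}_{rot}\cap\mathcal{B}^{-1}(\mathcal{B}(f_r))$ is a smooth codimension-one submanifold of $\mathcal{C}_{rot}$ near $f_r$ with $\Phi_l,\Psi_l$ in its tangent space, so the implicit function theorem yields smooth curves $h_t$ in this submanifold whose initial velocity is $\Phi_l$ resp. $\Psi_l$, up to a purely tangential reparametrization that has no bearing on the normal-part second variation.

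With $h_t$ constructed, $\mathcal{A}(h_t)\equiv 0$ by Step 1 and $\mathcal{B}(h_t)\equiv\mathcal{B}(f_r)$ by construction, so $h_t$ is $\Pi$-admissible; since it is simultaneously $\mathcal{B}$-admissible, Theorem \ref{thmstability1} applies without modification and delivers the claimed identity $\frac{d^2}{dt^2}\mathcal{W}(h_t)|_{t=0}=\langle L_r\Phi_l,\Phi_l\rangle_{L^2}$, which is strictly negative for the stated indices, completing the instability argument and yielding the eigenvalue count. The only delicate point I foresee is the bookkeeping around the rescaling $\Sigma\leftrightarrow\Sigma_r$ and around the tangential components introduced when passing from $\gamma_t$ to an admissible variation on $\Sigma_r$; neither is conceptually hard since conformal invariance of $\mathcal{W}$ and invariance of $L_r$ under tangential reparametrization reduce the computation to its normal component, and the implicit function theorem does the rest.
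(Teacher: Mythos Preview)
Your proposal is correct and follows essentially the same approach as the paper: both arguments reduce the full constraint $\Pi$ to the $\mathcal{B}$-constraint by working inside $\mathcal{C}_{rot}$ (where $\mathcal{A}\equiv 0$), both use the constant normal direction $\vec n$ as the transversal to $\mathcal{B}$ and the implicit function theorem to correct the $\mathcal{B}$-value, and both conclude by invoking Theorem~\ref{thmstability1}. The paper carries this out via an explicit two-parameter family $\tilde f(s,t)=h_{s+\gamma(t)}$ and verifies $\tau'(0)=0$ directly, whereas you phrase the same step as the level-set submanifold $\mathcal{C}_{rot}\cap\mathcal{B}^{-1}(\mathcal{B}(f_r))$ containing $\Phi_l,\Psi_l$ in its tangent space; your remark about a ``tangential reparametrization'' is unnecessary, since $\Phi_l,\Psi_l$ are already normal and lie in that tangent space, so the curve can be chosen with initial velocity exactly $\Phi_l$ resp.\ $\Psi_l$.
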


\begin{proof} 
Obviously the tori $T_r$ can be parametrized by $h _\gamma \in \mathcal C$ with $\gamma \equiv \rho := \arccos r$.

Consider any $h_\gamma \in \mathcal C$. By reparametrization of $\gamma$ we get a conformal immersion on the rectangle spanned by $(2 \pi, 0)$ and $(0, 2 \pi \omega)$. 
Hence $\Pi (h _\gamma) = (0, \omega)$, in particular $\cal A (h _\gamma) = 0$ for all surfaces of revolution.  

Now let $r \in \left[\frac{1}{k+2}, \frac{1}{k+1}\right)$ be fixed, $\rho := \arccos(r)$ and $h_\rho:[0, 2 \pi] ^2 \rightarrow \Ss^3$ the above parametrization of $T_r$.
In the proof of Theorem \ref{thmstability1} we have found for $2\leq l \leq k+1$ the instable directions
$$
\Phi _l \in \frak X (h _\rho), \; \Phi (u,v) = \sin (lv) \vec n(u,v)
$$  
and
$$
\Psi _l \in \frak X(h_\rho), \; \Psi(u,v) = \cos(lv) \vec n(u,v).
$$
The normal of a surface of revolution is given by
$$
\vec n (u,v)= \left( \sin (\gamma(v)) e^{iu}, 
 - \cos(\gamma(v)) e ^{iv} - i \frac{\gamma'(v)}{sin(\gamma(v))} e^{ iv}\right).$$ 
 We vary $h _\rho$ in direction of $\Phi_l$ in $\mathcal C$ by $f _t = h _{\gamma(t)}$ with
\begin{gather*}
\gamma : \Ss^1 \times (-\delta, \delta) \rightarrow \left(0,\frac{\pi}{2}\right), \\
\gamma(u,t) = \rho - t \sin (lv).
\end{gather*}
We have
\begin{align*}
\partial _t f (u,v) \big|_{t=0} & = \left(\sin (\rho) \sin (lv) e^{iu}, -\cos (\rho) \sin (lv) e^{iv} \right) 
= \Phi _l
\end{align*}
as desired. This variation already satisfies  $\cal A (f(t)) \equiv 0 = \cal A (h _\rho)$.
$\cal B$ is non-degenerate, hence we can correct this coordinate by using the implicit function theorem. 
Consider the $2$-parameter-family of surfaces of revolution given by
\begin{gather*}
\tilde f: \Sigma \times (-\delta, \delta)^2 \rightarrow \Ss^3, \\
\tilde f (s,t) = h _{s + \gamma (t)}.
\end{gather*}
Here $s + \gamma (t): \Ss^1 \rightarrow (0,\frac{\pi}{2})$, 
$(s+ \gamma (t))(v) = s + \gamma (t) (v) $ is well-defined for $t$ and $s$ small enough. 
We calculate
$$
\partial _s \tilde f\big|_{s=t=0} (u,v) = \left( - \sin (\rho) e^{iu}, \cos (\rho) e^{iv} \right) = - \vec n \in \frak X(h_\rho).
$$
Consider
$$
G : (-\delta, \delta) ^2 \rightarrow \R, \; G(s,t) := \cal B (\tilde{f} (s,t)).
$$
Then
$$
\partial _s G \big|_{s=t=0} = D \cal B _{h_\rho} ( - \vec n) \neq 0.
$$
Hence, by the implicit function theorem, there exists a function (after eventually making $\delta$ smaller) 
$\tau: (-\delta, \delta) \rightarrow (-\delta, \delta)$ with
$$
G(\tau(t), t) \equiv G(0,0).
$$
Thus $\bar {f} (t) := \tilde f (\tau (t),t)$ satisfies $\cal B(\bar{ f}) \equiv \cal B (h_\rho)$ and 
$$
\Pi (\bar{f}(t)) \equiv \Pi (h _\rho).$$
Moreover,
$$
\partial _t \bar  {f} \big|_{t=0} = - \tau '(0) \vec n + \Phi_l.
$$
With $\partial _t \bar {f} \big|_{t=0} \in \ker D \Pi (h _\rho)$ we get $\tau '(0) = 0$, eg.
$$
\partial _t \bar{f} |_{t=0} =  \Phi_l.
$$
In Theorem \ref{thmstability1} we calculated
$$
\frac{d^2}{dt^2} \cal W (\bar {f}) \Big|_{t=0} < 0.
$$
In the case of the other instable directions $\Psi _l (u,v) = \cos (lv) \vec n(u,v)$ we can proceed in exactly the same way. 
This completes the proof. 
\end{proof}

\bibliography{bib}
\vspace{0.5cm}
\begin{center}
{\small
Ernst Kuwert, Mathematisches Institut, Universit\"at Freiburg, Eckerstra{\ss}e 1,
79104 Freiburg, Germany\\
{\sc Email: } ernst.kuwert@math.uni-freiburg.de\\
\vspace{0.5cm}
Johannes Lorenz, Mathematisches Institut, Universit\"at Freiburg, Eckerstra{\ss}e 1,
79104 Freiburg, Germany\\
{\sc Email: } johannes.lorenz@math.uni-freiburg.de \\
} 
\end{center}
\end{document}